\newtheorem{theorem}{Theorem}[section]
\newtheorem{lemma}[theorem]{Lemma}
\newtheorem{proposition}[theorem]{Proposition}
\theoremstyle{definition}
\newtheorem{example}[theorem]{Example}
\newtheorem{remark}[theorem]{Remark}
\begin{document}

\title[Classical invariant theory for free metabelian Lie algebras]
{Classical invariant theory\\
for free metabelian Lie algebras}

\author[Vesselin Drensky, {\c S}ehmus F{\i}nd{\i}k]
{Vesselin Drensky, {\c S}ehmus F{\i}nd{\i}k}
\address{Institute of Mathematics and Informatics,
Bulgarian Academy of Sciences,
1113 Sofia, Bulgaria}
\email{drensky@math.bas.bg}
\address{Department of Mathematics,
\c{C}ukurova University, 01330 Balcal\i,
 Adana, Turkey}
\email{sfindik@cu.edu.tr}

\thanks
{Partially supported by Grant I02/18 of the Bulgarian Science Fund}
\thanks
{The research of the first named author was partially supported
by Grant Ukraine 01/0007 of the Bulgarian Science Fund for Bilateral Scientific Cooperation between Bulgaria and Ukraine}
\thanks
{The research of the second named author was partially supported by the
Council of Higher Education (Y\"OK) in Turkey}

\subjclass[2010]{17B01, 17B30, 13A50, 15A72, 17B63.}
\keywords{free metabelian Lie algebras, classical invariant theory, noncommutative invariant theory.}

\begin{abstract}
Let $W_d=K^d$ be the $d$-dimensional vector space over a field $K$ of characteristic 0
with the canonical action of the general linear group $GL_d(K)$
and let $KX_d$ be the vector space of the linear functions on $W_d$.
One of the main topics of classical invariant theory is the study
of the algebra of invariants $K[X_d]^{SL_2(K)}$ of the special linear group $SL_2(K)$,
when $KX_d$ is a direct sum of $SL_2(K)$-modules of binary forms.
Noncommutative invariant theory deals with the algebra of invariants $F_d({\mathfrak V})^G$
of a group $G<GL_d(K)$ acting on the relatively free algebra $F_d({\mathfrak V})$
of a variety of $K$-algebras $\mathfrak V$.
Due to the noncommutativity it is more convenient to assume that $F_d({\mathfrak V})$ is generated by $W_d$
instead of by $KX_d$, with the corresponding action of $GL_d(K)$.
In this paper we consider the free metabelian Lie algebra $F_d({\mathfrak A}^2)$
which is the relatively free algebra in the variety ${\mathfrak A}^2$ of metabelian (solvable of class 2) Lie algebras.
We study the algebra $F_d({\mathfrak A}^2)^{SL_2(K)}$ and
describe the cases when it is finitely generated.
This happens if and only if as an $SL_2(K)$-module $W_d\cong K^2\oplus K\oplus\cdots\oplus K$ or $W_d\cong S^2(K^2)$
(and in the trivial case $KX_d\cong K\oplus\cdots\oplus K$).
Here $SL_2(K)$ acts canonically on $K^2$, trivially on $K$, and $S^2(K^2)$ is the symmetric square of $K^2$.
For small $d$ we give a list of generators even when $F_d({\mathfrak A}^2)^{SL_2(K)}$ is not finitely generated.
The methods for establishing that the algebra $F_d({\mathfrak A}^2)^{SL_2(K)}$ is not finitely generated work also for other
relatively free algebras $F_d({\mathfrak V})$ and for other groups $G$.
\end{abstract}

\maketitle

\section{Introduction}

In classical invariant theory one considers the action of the general linear group $GL_d({\mathbb C})$ on the algebra ${\mathbb C}[X_d]$
of the polynomial functions on the $d$-dimensional vector space $W_d={\mathbb C}^d$.
Then for a subgroup $G$ of $GL_d({\mathbb C})$ one studies the algebra ${\mathbb C}[X_d]^G$ of $G$-invariants.
One of the possible noncommutative generalizations is to replace the action of $GL_d({\mathbb C})$ on ${\mathbb C}[X_d]$
with the action of $GL_d(K)$ on the relatively free algebra $F_d({\mathfrak V})$
in a variety of (associative, Lie, Jordan, etc.) $K$-algebras $\mathfrak V$, where $K$ is an arbitrary field of characteristic 0.
Due to the noncommutativity it is more convenient to assume that $F_d({\mathfrak V})$ is generated by $W_d$
instead of by $KX_d$ as in classical invariant theory, with the corresponding action of $GL_d(K)$.
In the sequel we consider this action of $GL_d(K)$.
In particular, the role of $K[X_d]$ is played by the symmetric algebra
\[
S(W_d)=S(KY_d)=\bigoplus_{n\geq 0}S^n(W_d),
\]
where $S^n(W_d)$ is $n$th symmetric power of the canonical $GL_d(K)$-module $W_d=KY_d$ with the diagonal action of $GL_d(K)$ on $S^n(W_d)$.
Then for a subgroup $G$ of $GL_d(K)$
we study the algebra $F_d({\mathfrak V})^G$ of $G$-invariants in $F_d({\mathfrak V})$.

One of the most intensively studied objects in classical invariant
theory is the algebra of invariants of the special linear group
$SL_2({\mathbb C})$ acting on binary forms.
The translation in the language of noncommutative invariant theory is to assume that as an $SL_2(K)$-module
the vector space $W_d=KY_d$, $d\geq 2$, is isomorphic to a direct sum $V_{p_1}\oplus\cdots\oplus V_{p_r}$,
where $V_p=S^p(K^2)$ and $SL_2(K)$ acts canonically on $K^2$
(and the action of $SL_2(K)$ on $KY_d$ is not trivial, i.e., at least one $p_i$ is positive).
Then we extend diagonally the action of $SL_2(K)$ on
the relatively free algebra $F_d({\mathfrak V})$ generated by $Y_d$ in the variety of $K$-algebras $\mathfrak V$.
The algebra $F_d({\mathfrak V})^{SL_2(K)}$ is a noncommutative analogue of the ``classical'' object
$K[X_d]^{SL_2(K)}$ of polynomial $SL_2(K)$-invariants.

In the general case when $G$ is an arbitrary subgroup of $GL_d(K)$ and $\mathfrak V$ is an arbitrary variety
of algebras not too much is known about the algebra of $G$-invariants $F_d({\mathfrak V})^G$ in $F_d({\mathfrak V})$.
The picture is more or less clear for varieties of associative algebras. For finite groups $G$ see the surveys
\cite{F, D2, KS} and for $G$ reductive -- the papers \cite{V, DD}. In particular, $F_d({\mathfrak V})^G$
is finitely generated for all reductive groups $G$ acting rationally on $KX_d$ if and only if $\mathfrak V$
satisfies the Engel identity $[[[z_2,z_1],\ldots],z_1]=0$.

It is well known that for varieties of Lie algebras there is a dichotomy:
The variety $\mathfrak V$ either contains the metabelian variety ${\mathfrak A}^2$ or satisfies the Engel identity. Recall that
the variety ${\mathfrak A}^2$ of metabelian (solvable of class 2) Lie algebras is defined by the polynomial identity
\[
[[z_1,z_2],[z_3,z_4]]=0
\]
and the free metabelian Lie algebra $F_d({\mathfrak A}^2)$
is isomorphic to the factor algebra $L_d/L_d''$ of the free $d$-generated Lie algebra $L_d$
modulo the second term $L_d''=[[L_d,L_d],[L_d,L_d]]$ of its derived series.
If $\mathfrak V$ satisfies the Engel identity, then the celebrated result of Zel'manov \cite{Z} gives that $\mathfrak V$
is nilpotent. (The fact that the Engel identity implies that $F_d({\mathfrak V})$ is nilpotent
with the class of nilpotency depending on $d$ follows from the
famous paper by Kostrikin \cite{Ko} on the Burnside problem.)
When $\mathfrak V$ is nilpotent, the algebra $F_d({\mathfrak V})$ is finite dimensional. Hence $F_d({\mathfrak V})^G$ is also finite
dimensional and therefore finitely generated as a Lie algebra for all groups $G$. If $\mathfrak V$ contains ${\mathfrak A}^2$, then
the canonical homomorphism $F_d({\mathfrak V})\to F_d({\mathfrak A}^2)$ maps $F_d({\mathfrak V})^G$ onto $F_d({\mathfrak A}^2)^G$.
Hence, concerning the finite generation of $F_d({\mathfrak V})^G$, the free metabelian algebra is the key object.
If the group $G$ is finite and $\mathfrak V$ contains ${\mathfrak A}^2$, then $F_d({\mathfrak V})^G$ is never finitely generated
by \cite{Br, D1}. To the best of our knowledge, not too much information is available for $F_d({\mathfrak V})^G$ when the group $G$ is not finite.
The algebra of invariants $F_d({\mathfrak V})^G$ of the group $G=UT_2(K)$ of unipotent $2\times 2$ matrices acting unipotently on $KX_d$
was studied in \cite{DG, DDF}. In particular, when ${\mathfrak V}={\mathfrak A}^2$ the algebra of $F_d({\mathfrak A}^2)^{UT_2(K)}$
is finitely generated if and only if $KX_d=K^2\oplus K^{d-2}$ and $UT_2(K)$ acts canonically on $K^2$ and trivially on $K^{d-2}$.

In the present paper we study the algebra $F_d({\mathfrak A}^2)^{SL_2(K)}$ of $SL_2(K)$-invariants of $F_d({\mathfrak A}^2)$.
We show that $F_d({\mathfrak A}^2)^{SL_2(K)}$ is finitely generated
if and only if as an $SL_2(K)$-module $KY_d\cong V_1\oplus V_0\oplus\cdots\oplus V_0$ or $KY_d\cong V_2$
(and in the obvious case $KY_d\cong V_0\oplus\cdots\oplus V_0$) with the trivial action of $SL_2(K)$ on $V_0\cong K$.

One of the main tools in our considerations is the Shmel'kin embedding theorem \cite{Sh} which
allows to consider $F_d({\mathfrak A}^2)$ as a subalgebra of the abelian wreath
product $(KA_d)\text{\rm wr}(KU_d)$ of two $d$-dimensional abelian Lie algebras $KA_d$ and $KU_d$ with bases
$A_d=\{a_1,\ldots,a_d\}$ and $U_d=\{u_1,\ldots,u_d\}$, respectively.

To see whether $F_d({\mathfrak A}^2)^{SL_2(K)}$ is finitely generated, the most
difficult part of the proof is to find whether the commutator ideal $F_d({\mathfrak A}^2)'$ contains nonzero invariants.
For small $d$ it can be established calculating the Hilbert (or Poincar\'e) series of $F_d({\mathfrak A}^2)^{SL_2(K)}$.
But it has turned out that in the general case we have to embed $KA_d\text{\rm wr}KU_d$ into a Poisson algebra and then to use classical results
for the existence of sufficiently many invariants of $SL_2(K)$.

The commutator ideal $F_d({\mathfrak A}^2)'$ has a natural structure of an $S(W_d)$-module. Well known results give that
$(F_d({\mathfrak A}^2)')^{SL_2(K)}$ is a finitely generated $S(W_d)^{SL_2(K)}$-module.
For small $d$ we give a list of generators of the $S(W_d)^{SL_2(K)}$-module $(F_d({\mathfrak A}^2)')^{SL_2(K)}$.
This allows to find explicit sets of generators of the Lie algebra
$F_d({\mathfrak A}^2)^{SL_2(K)}$ even if is not finitely generated. In this part our paper may be considered as a continuation of \cite{DDF}
where similar problems were solved for $F_d({\mathfrak A}^2)^{UT_2(K)}$.

Finally, we want to mention that the methods for establishing that the algebra $F_d({\mathfrak A}^2)^{SL_2(K)}$
is not finitely generated work also for other relatively free algebras $F_d({\mathfrak V})$ and for other groups $G$.

\section{Preliminaries}

In the sequel we fix the base field $K$ of characteristic 0.
All vector spaces, algebras, tensor products, etc., will be over $K$.
If $R$ is any algebra generated by the elements $r_1,\ldots,r_m$ and $I$ is an ideal of $R$,
we shall denote the generators of the factor algebra $R/I$ with the same symbols $r_1,\ldots,r_m$.
\subsection{Free metabelian Lie algebras and abelian wreath products.}
For a background on varieties of Lie algebras we refer to \cite{Ba}.
We denote by $F_d$ the free metabelian Lie algebra $F_d({\mathfrak A}^2)$ freely generated by $Y_d=\{y_1,\ldots,y_d\}$.
We assume that all Lie commutators are left normed, e.g.,
\[
[y_1,y_2,y_3]=[[y_1,y_2],y_3]=[y_1,y_2]\text{ad}y_3.
\]
The commutator ideal $F_d'$ has a basis consisting of all
\[
[y_{j_1},y_{j_2},y_{j_3},\ldots,y_{j_n}],\quad 1\leq j_i\leq d,\quad j_1>j_2\leq j_3\leq\cdots\leq j_n.
\]
The algebra $F_d$ has a grading
\[
F_d=(F_d)_1\oplus (F_d)_2\oplus (F_d)_3\oplus\cdots
\]
where $(F_d)_n$ is the vector subspace of all homogeneous elements of degree $n$. Similarly, $F_d$ is
${\mathbb Z}^d$-graded (or multigraded), with grading which
counts the degree of each variable $y_i$, $i=1,\ldots,d$.
The Hilbert series of $F_d$ is the formal power series defined by
\[
H(F_d,T_d)=H(F_d,t_1,\ldots,t_d)=\sum_{n_i\geq 0}\dim (F_d)_nT_d^n,
\]
where $(F_d)_n$ is the homogeneous component of degree $n=(n_1,\ldots,n_d)$ and
$T_d^n=t_1^{n_1}\cdots t_d^{n_d}$. It is well known, see e.g., \cite{D1}, that
\[
H(F_d,T_d)=1+(t_1+\cdots+t_d)+(t_1+\cdots+t_d-1)\prod_{j=1}^d\frac{1}{1-t_j}.
\]
The consequence of the metabelian identity
\[
[z_{j_1},z_{j_2},z_{j_{\sigma(3)}},\ldots,z_{j_{\sigma(n)}}]
=[z_{j_1},z_{j_2},z_{j_3},\ldots,z_{j_n}],
\]
where $\sigma$ is an arbitrary permutation of $3,\ldots,n$,
allows to define an action of the symmetric algebra $S(W_d)=S(KY_d)$ on $F_d'$ by the rule
\[
wf(y_1,\ldots,y_d)=wf(\text{ad}y_1,\ldots,\text{ad}y_d),\quad w\in F_d',\quad f(Y_d)\in S(KY_d).
\]
One of the main tools in our considerations is the Shmel'kin embedding theorem \cite{Sh} which
allows to consider $F_d({\mathfrak A}^2)$ as a subalgebra of the abelian wreath
product $KA_d\text{\rm wr}KU_d$ of two $d$-dimensional abelian Lie algebras $KA_d$ and $KU_d$.
Recall the construction of abelian wreath products due to Shmel'kin \cite{Sh}.
Let $KA_d$ and $KU_d$ be the abelian
Lie algebras with bases $A_d=\{a_1,\ldots,a_d\}$ and
$U_d=\{u_1,\ldots,u_d\}$, respectively. Let $C_d$ be the free right
$S(KU_d)$-module with free generators $a_1,\ldots,a_d$.
We give it the structure of a Lie algebra with trivial multiplication.
Then $(KA_d)\text{\rm wr}(KU_d)$ is equal to the semidirect sum $C_d\leftthreetimes (KU_d)$. The elements
of $(KA_d)\text{\rm wr}(KU_d)$ are of the form
\[\sum_{j=1}^da_jf_j(U_d)+\sum_{j=1}^d\beta_ju_j,\quad
f_j\in S(KU_d),\quad \beta_j\in K,\quad j=1,\ldots,d.
\]
The multiplication in $(KA_d)\text{\rm wr}(KU_d)$ is defined by
\[
[C_d,C_d]=[U_d,U_d]=0,
\]
\[
[a_jf_j(U_d),u_i]=a_jf_j(U_d)u_i,\quad i,j=1,\ldots,d.
\]
The embedding theorem of Shmel'kin \cite{Sh} gives that the free metabelian Lie algebra $F_d$
is isomorphic to the subalgebra of $(KA_d)\text{\rm wr}(KU_d)$ generated by
$a_j+u_j$, $j=1,\ldots,d$. In the sequel we identify
$F_d$ with its copy in $(KA_d)\text{\rm wr}(KU_d)$ and assume that
\[
y_j=a_j+u_j,\quad j=1,\ldots,d.
\]
If $w$ belongs to $F_d'$,
\[
w=\sum_{i>j}[y_i,y_j]f_{ij}(Y_d)=\sum_{i>j}[y_i,y_j]f_{ij}(\text{ad}y_1,\ldots,\text{ad}y_d),\quad f_{ij}(Y_d)\in S(KY_d),
\]
then in $(KA_d)\text{\rm wr}(KU_d)$
\[
w=\sum(a_iu_j-a_ju_i)f_{ij}(U_d).
\]
The element
\[
\sum_{i=1}^da_if_i(U_d)\in (KA_d)\text{\rm wr}(KU_d)
\]
belongs to $F_d'$ if and only if
\[
\sum_{i=1}^du_if_i(U_d)=0.
\]

\subsection{Poisson algebras.}\label{subsection on Poisson algebras}
Given two algebraically independent $SL_2(K)$-invariants $f_1(Y_d),f_2(Y_d)\in S(KY_d)$ we shall construct a nonzero
$SL_2(K)$-invariant $\pi(f_1,f_2)$ in $F_d'$. For this purpose we need to embed $(KA_d)\text{\rm wr}(KU_d)$ into a Poisson algebra.
Recall that the commutative-associative algebra $P$ with multiplication $\cdot$ is a (commutative) Poisson algebra
if it has a Poisson bracket $[\cdot,\cdot]$ such that
$(P,[\cdot,\cdot])$ is a Lie algebra and $[\cdot,\cdot]$ satisfies the Leibniz rule
\[
[f_1\cdot f_2,f_3]=[f_1,f_3]\cdot f_2+f_1\cdot[f_2,f_3],\quad f_1,f_2,f_3\in P.
\]
In the sequel we shall omit the $\cdot$ in the multiplication in Poisson algebras.

\begin{lemma}\label{Poisson structure on K[A,Y]}
Let $R_d$ be the factor algebra $S(KA_d\oplus KU_d)/(A_d^2)$ of the symmetric algebra $S(KA_d\oplus KU_d)$
on the vector space $KA_d\oplus KU_d$ with basis $A_d\cup U_d$ modulo the ideal generated
by all products $a_ia_j$, $i,j=1,\ldots,d$, endowed with the Poisson bracket defined by
\[
[a_iu_1^{p_1}\cdots u_d^{p_d},u_1^{q_1}\cdots u_d^{q_d}]=(q_1+\cdots+q_d)a_iu_1^{p_1+q_1}\cdots u_d^{p_d+q_d},
\]
\[
[u_1^{p_1}\cdots u_d^{p_d},u_1^{q_1}\cdots u_d^{q_d}]=[a_iu_1^{p_1}\cdots u_d^{p_d},a_ju_1^{q_1}\cdots u_d^{q_d}]=0.
\]
Then

{\rm (i)} As a vector space $R_d$ has a basis
\[
\{U_d^p=u_1^{q_1}\cdots u_d^{q_d},\quad a_iU_d^q=a_iu_1^{q_1}\cdots u_d^{q_d}\mid q_j\geq 0,\quad i=1,\ldots,d\}.
\]

{\rm (ii)} The Lie algebra $(KA_d)\text{\rm wr}(KU_d)$ is isomorphic to the Lie subalgebra of $R_d$ generated by $A_d$ and $U_d$.
\end{lemma}

\begin{proof}
The verification that $R_d$ is a Poisson algebra with respect to the given bracket is straightforward.
The defining relations $a_ia_j=0$ give that as a vector space $R_d$ has a basis consisting of the elements from the statement (i)
of the lemma, since it is a commutative associative algebra.
For the proof of (ii) we see that the commutator ideal of the Lie subalgebra of $R_d$
generated by $A_d$ and $U_d$ has a basis consisting of all
\[
a_iU_d^p=a_i(\text{ad}^{p_1}u_1)\cdots (\text{ad}^{p_d}u_d), \quad p=(p_1,\ldots,p_d)\not=(0,\ldots,0),
\]
the same as the commutator ideal of $(KA_d)\text{\rm wr}(KU_d)$. The multiplication table is also the same. Hence both Lie algebras are isomorphic.
\end{proof}

\subsection%{The Jacobian criterion for algebraic independence.}
{Algebraically independent elements in Poisson algebras.}
The well known Jacobian criterion \cite{J} from 1841 gives that the elements $f_1,\ldots,f_m\in S(KY_d)$, $m\leq d$,
are algebraically dependent if and only if
\[
\left\vert\begin{matrix}
\partial f_1/\partial y_{j_1}&\partial f_2/\partial y_{j_1}&\cdots&\partial f_m/\partial y_{j_1}\\
\partial f_1/\partial y_{j_2}&\partial f_2/\partial y_{j_2}&\cdots&\partial f_m/\partial y_{j_2}\\
\vdots&\vdots&\ddots&\vdots\\
\partial f_1/\partial y_{j_m}&\partial f_2/\partial y_{j_m}&\cdots&\partial f_m/\partial y_{j_m}\\
\end{matrix}\right\vert=0
\]
for all determinants
with $1\leq j_1<j_2<\cdots<j_m\leq d$. A simple proof for $m=d$ can be found e.g., in \cite{ER} or \cite{SU}.
We shall use the Jacobian criterion for $m=2$ only (and for any $d\geq 2$):

\begin{lemma}\label{Jacobian criterion}
Let $f_1(Y_d),f_2(Y_d)\in S(KY_d)$ and let
\[
J_{ij}(f_1,f_2)=\left\vert\begin{matrix}
\partial f_1/\partial y_i&\partial f_2/\partial y_i\\
\partial f_1/\partial y_j&\partial f_2/\partial y_j\\
\end{matrix}\right\vert=0
\]
for all $1\leq i<j\leq d$. Then $f_1(Y_d)$ and $f_2(Y_d)$ are algebraically dependent.
\end{lemma}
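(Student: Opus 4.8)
The plan is to prove the contrapositive: assuming that $f_1,f_2$ are algebraically \emph{independent} over $K$, I would produce a pair of indices $i<j$ with $J_{ij}(f_1,f_2)\neq 0$. Since $\{x_1,\ldots,x_d\}$ is a transcendence basis of the rational function field $K(X_d)$ over $K$ and $\{f_1,f_2\}$ is algebraically independent, the exchange property of transcendence bases lets me enlarge $\{f_1,f_2\}$ to a transcendence basis of the form $\{u_1,\ldots,u_d\}=\{f_1,f_2,x_{i_3},\ldots,x_{i_d}\}$, where $\{i_3,\ldots,i_d\}\subseteq\{1,\ldots,d\}$; write $i_1,i_2$ for the two complementary indices. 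A short block computation then identifies the $d\times d$ Jacobian determinant of the new system with a single minor: the rows of $(\partial u_l/\partial x_j)$ coming from the variables $x_{i_3},\ldots,x_{i_d}$ are standard basis vectors, so expanding along them gives
\[
\det\left(\frac{\partial u_l}{\partial x_j}\right)_{l,j=1}^{d}=\pm\, J_{i_1i_2}(f_1,f_2).
\]
Thus it suffices to show that the Jacobian determinant of a transcendence basis never vanishes.

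The key step, and the only place where $\mathrm{char}\,K=0$ enters, is the claim: if $u_1,\ldots,u_d\in K(X_d)$ form a transcendence basis of $K(X_d)$ over $K$, then $\det(\partial u_l/\partial x_j)\neq 0$. To prove it I would note that each $x_k$ is algebraic over $F=K(u_1,\ldots,u_d)$, with a monic minimal polynomial $p_k(T)=\sum_i c_{k,i}T^i\in F[T]$ satisfying $p_k(x_k)=0$. Since $\mathrm{char}\,K=0$ this polynomial is separable, so $p_k'(x_k)\neq 0$. Differentiating the identity $p_k(x_k)=0$ with respect to $x_j$ and applying the chain rule to the coefficients $c_{k,i}\in K(u_1,\ldots,u_d)$ yields
\[
p_k'(x_k)\,\delta_{kj}+\sum_{l=1}^{d} q_{kl}\,\frac{\partial u_l}{\partial x_j}=0,\qquad q_{kl}:=\sum_i\frac{\partial c_{k,i}}{\partial u_l}\,x_k^i .
\]
In matrix form this reads $QJ=-\operatorname{diag}\bigl(p_1'(x_1),\ldots,p_d'(x_d)\bigr)$, where $J=(\partial u_l/\partial x_j)$ and $Q=(q_{kl})$. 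The right-hand side is an invertible diagonal matrix, so $\det(Q)\det(J)\neq 0$ and in particular $\det(J)\neq 0$, which proves the claim.

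Combining the two steps gives $J_{i_1i_2}(f_1,f_2)\neq 0$, contradicting the hypothesis that every $J_{ij}(f_1,f_2)$ vanishes; hence $f_1,f_2$ must be algebraically dependent. I expect the main obstacle to be the claim in the second paragraph: the reduction via the exchange property and the block determinant are routine, but the nonvanishing of the Jacobian of a transcendence basis genuinely needs separability, which is exactly why the statement is confined to characteristic $0$. For the specific case $k=2$ needed here one could instead specialize all but two of the variables and invoke the classical $d=2$ statement cited from \cite{ER, SU}, but controlling the specialization forces an interpolation argument, so the uniform transcendence-basis computation above seems cleaner.
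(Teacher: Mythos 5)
Your proof is correct, but it takes a genuinely different route from the one in the paper. The paper deliberately avoids transcendence-basis machinery: assuming $f_1,f_2$ algebraically independent and all $J_{ij}$ zero, it first observes that the vanishing of the $2\times 2$ minors forces all the gradient pairs $(\partial f_1/\partial x_i,\partial f_2/\partial x_i)$ to be proportional over the fraction field, then picks a nonzero polynomial $p(t_1,t_2,u_1,\ldots,u_k)$ with $p(f_1,f_2,x_1,\ldots,x_k)=0$, with $k$ and then $\deg_{u_k}p$ minimal, and differentiates the identity with respect to $x_k$ and $x_{k+1}$ to kill the term $\partial p/\partial u_k$ and contradict minimality. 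That argument is elementary and self-contained, which is exactly what the authors announce (``for self-completeness we present an elementary proof of the part we need''). You instead enlarge $\{f_1,f_2\}$ to a transcendence basis $\{f_1,f_2,x_{i_3},\ldots,x_{i_d}\}$ by the exchange property, reduce the full $d\times d$ Jacobian to the single minor $J_{i_1i_2}$, and then prove the general fact that the Jacobian of a transcendence basis is nonzero in characteristic $0$ via separability of the minimal polynomials of the $x_k$ over $K(u_1,\ldots,u_d)$ and the matrix identity $QJ=-\operatorname{diag}(p_1'(x_1),\ldots,p_d'(x_d))$. All the steps check out (the chain rule for formal derivatives of rational functions and the invertibility of the diagonal matrix are legitimate), and your argument has the advantage of working verbatim for $k$ polynomials and $k\times k$ minors, whereas the paper's is tailored to $k=2$; the price is that you import standard field-theoretic input (exchange lemma, separability) that the paper's proof manages to do without.
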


The free Poisson algebra $P(Y_d)$ freely generated by $Y_d$ can be obtained in the following way.
Fix a basis $B_d=\{b_1,b_2,\ldots\}$ of the free Lie algebra $L_d$. Then, as a commutative algebra, $P(Y_d)$ is isomorphic to
the symmetric algebra $S(L_d)=S(KB_d)$. The Poisson bracket in $P(Y_d)$ is defined
extending the Lie bracket in $L_d$ using the Leibniz rule.
If $f_1(Y_d),f_2(Y_d)$ are elements in $S(KY_d)$ considered as elements of the free Poisson algebra $P(Y_d)$, then, see e.g., \cite{SU},
\begin{equation} \label{eq:1}
[f_1,f_2]=\sum_{i<j}[y_i,y_j]\left(\frac{\partial f_1}{\partial y_i}\frac{\partial f_2}{\partial y_j}
-\frac{\partial f_1}{\partial y_j}\frac{\partial f_2}{\partial y_i}\right).
\end{equation}
Hence $f_1(Y_d),f_2(Y_d)\in S(KY_d)\subset P(Y_d)$ are algebraically independent in $S(KY_d)$ if and only if $[f_1,f_2]\not=0$
in $P(Y_d)$. An analogue for any $f_1,f_2\in P(Y_d)$ was established in \cite{MLU}.
We cannot apply directly this result for the algebra $R_d$ defined in the previous subsection because it is a homomorphic image
of the free Poisson algebra. We believe that the following proposition is of independent interest
because it allows, starting from two polynomials in commuting variables, to construct elements with prescribed properties in the free
metabelian Lie algebra.

\begin{proposition}\label{independent polynomials in R_d}
Let the homogeneous polynomials $f_1(Y_d)$ and $f_2(Y_d)$ be algebraically independent in $S(KY_d)$. Then in the Poisson algebra $R_d$
the commutator
\[
\pi(f_1,f_2)=[f_1(A_d+U_d),f_2(A_d+U_d)]
\]
\[
=[f_1(a_1+u_1,\ldots,a_d+u_d),f_2(a_1+u_1,\ldots,a_d+u_d)]
\]
is different from zero and belongs to the commutator ideal $F_d'$ of the free metabelian Lie algebra $F_d$ embedded in $R_d$.
\end{proposition}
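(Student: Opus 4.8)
The plan is to compute the Poisson bracket $\pi(f_1,f_2)=[f_1(A_d+Y_d),f_2(A_d+Y_d)]$ explicitly in $R_d$, read off the stated formula, and then extract from it both the membership in $F_d'$ and the non-vanishing. The whole proof hinges on the two structural features of $R_d$ recorded in Lemma~\ref{Poisson structure on K[A,Y]}: the relation $a_ra_s=0$ and the free-module basis $\{a_kY_d^q\}$.

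First I would establish the displayed formula. Writing $x_j=a_j+y_j$, the Poisson bracket of two polynomial expressions in the $x_j$ is a derivation in each argument, so exactly as in the free Poisson algebra $P(X_d)$ one has
\[
[f_1(x),f_2(x)]=\sum_{i<j}[x_i,x_j]\left(\frac{\partial f_1}{\partial x_i}(x)\frac{\partial f_2}{\partial x_j}(x)-\frac{\partial f_1}{\partial x_j}(x)\frac{\partial f_2}{\partial x_i}(x)\right);
\]
this identity uses only the Leibniz rule and antisymmetry, hence it remains valid in the quotient $R_d$ for the specific polynomials $f_1,f_2$. By the multiplication rules in $R_d$ one computes $[x_i,x_j]=[a_i,y_j]-[a_j,y_i]=a_iy_j-a_jy_i$. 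The key simplification is the relation $a_ra_s=0$: expanding $(\partial f_l/\partial x_i)(a+y)$ around $Y_d$ leaves $(\partial f_l/\partial y_i)(Y_d)$ plus terms each carrying a factor $a_r$, and after multiplication by the element $a_iy_j-a_jy_i$, which is already of degree one in $A_d$, those extra terms vanish. Replacing every $(\partial f_l/\partial x_i)(a+y)$ by $(\partial f_l/\partial y_i)(Y_d)$ therefore yields the formula in the statement.

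The membership $\pi(f_1,f_2)\in F_d'$ is then immediate from the formula: it displays $\pi(f_1,f_2)$ as $\sum_{i<j}(a_iy_j-a_jy_i)h_{ij}(Y_d)$ with $h_{ij}\in K[Y_d]$, and by the description of $F_d'$ inside the wreath product each $(a_iy_j-a_jy_i)f_{ij}(Y_d)$ is the image of $[x_i,x_j]f_{ij}(X_d)\in F_d'$, so every such $K[Y_d]$-combination lies in $F_d'$.

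The main point, and the step I expect to be the real obstacle, is the non-vanishing, because one must rule out cancellation among the Jacobians. I would regroup $\pi(f_1,f_2)=\sum_{k}a_kg_k(Y_d)$; since the $a_kY_d^q$ form part of the basis of $R_d$, we have $\pi(f_1,f_2)=0$ if and only if all $g_k=0$, with no cross-cancellation between distinct $a_k$. Collecting coefficients and using $J_{kj}(f_1,f_2)=-J_{jk}(f_1,f_2)$ gives $g_k=\sum_{j}y_jJ_{kj}(f_1,f_2)$, and since $f_1,f_2$ are homogeneous, Euler's identity $\sum_jy_j(\partial f_l/\partial y_j)=(\deg f_l)f_l$ turns this into $g_k=(\deg f_2)f_2(\partial f_1/\partial y_k)-(\deg f_1)f_1(\partial f_2/\partial y_k)$. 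If all $g_k$ vanished, then substituting the identities $(\deg f_2)f_2\,(\partial f_1/\partial y_k)=(\deg f_1)f_1\,(\partial f_2/\partial y_k)$ into $(\deg f_1)f_1\,J_{ij}(f_1,f_2)$ and using that $K[Y_d]$ is an integral domain (with $f_1\neq0$ and $\deg f_1\geq1$, since two algebraically independent polynomials are nonconstant) would force $J_{ij}(f_1,f_2)=0$ for all $i,j$; by the Jacobian criterion (Lemma~\ref{Jacobian criterion}) this contradicts the algebraic independence of $f_1,f_2$. Hence some $g_k\neq0$ and $\pi(f_1,f_2)\neq0$. It is exactly this passage to the coefficients $g_k$, combined with Euler's identity and Lemma~\ref{Jacobian criterion}, that handles the possible cancellation.
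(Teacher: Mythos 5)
Your proposal is correct, and it follows the same overall architecture as the paper's proof: derive the Poisson-bracket formula from the Leibniz rule and the relation $a_ra_s=0$, read off membership in $F_d'$ from the shape $\sum_{i<j}(a_iy_j-a_jy_i)h_{ij}(Y_d)$, and then reduce the non-vanishing to the vanishing of all coefficients $g_k$ of the basis elements $a_kY_d^q$, ending in a contradiction with the Jacobian criterion of Lemma~\ref{Jacobian criterion}. The one place where you genuinely diverge is the crucial deduction from ``all $g_k=0$'' to ``all $J_{ij}(f_1,f_2)=0$''. The paper applies Euler's identity only to $f_2$, interprets the resulting equations as a homogeneous linear system in the unknowns $\partial f_1/\partial y_j$, and argues via row reduction that the coefficient matrix has rank exactly $d-1$, so that the gradient of $f_1$ is a rational multiple of the gradient of $f_2$. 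You instead apply Euler's identity to both $f_1$ and $f_2$, obtaining the clean closed form $g_k=\deg(f_2)f_2\,\partial f_1/\partial y_k-\deg(f_1)f_1\,\partial f_2/\partial y_k$, and then kill the Jacobians by a one-line substitution and cancellation of the nonzero factor $\deg(f_1)f_1$ in the integral domain $K[Y_d]$ (correctly noting that algebraic independence forces $f_1$ to be nonconstant). Your route buys a shorter and more transparent argument that avoids any discussion of the rank of a $d\times d$ matrix over the fraction field; the paper's version makes the proportionality of the two gradients explicit, which is the same geometric content. Both are valid; yours is arguably the cleaner write-up of this step.
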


\begin{proof}
A direct consequence of the equality (\ref{eq:1}) together with the fact that we may view the Poisson subalgebra of $R_d$
generated by $a_1+u_1,\ldots,a_d+u_d$ as a Poisson homomorphic image of the free Posisson algebra,
we obtain
\[
[f_1(A_d+U_d),f_2(A_d+U_d)]=\sum_{i<j}(a_iu_j-a_ju_i)\left\vert\begin{matrix}
\partial f_1(U_d)/\partial u_i&\partial f_2(U_d)/\partial u_i\\
\partial f_1(U_d)/\partial u_j&\partial f_2(U_d)/\partial u_j\\
\end{matrix}\right\vert
\]
\[
=\sum_{i<j}(a_iu_j-a_ju_i)\left(\frac{\partial f_1(U_d)}{\partial u_i}\frac{\partial f_2(U_d)}{\partial u_j}
-\frac{\partial f_1(U_d)}{\partial u_j}\frac{\partial f_2(U_d)}{\partial u_i}\right)
\]
\[
=\sum_{i=1}^da_i\left(\frac{\partial f_1(U_d)}{\partial u_i}\sum_{j=1}^du_j\frac{\partial f_2(U_d)}{\partial u_j}
-\sum_{j=1}^d\frac{\partial f_1(U_d)}{\partial u_j}u_j\frac{\partial f_2(U_d)}{\partial u_i}\right).
\]
because $[a_i+u_i,a_j+u_j]=a_iu_j-a_ju_i$ and $a_ka_l=0$.
Since $f_2(U_d)$ is homogeneous, it is easy to see that
\[
\sum_{j=1}^du_j\frac{\partial f_2(U_d)}{\partial u_j}=\deg(f_2)f_2(U_d).
\]
If $[f_1(A_d+U_d),f_2(A_d+U_d)]=0$, then the coordinates of $a_i$ are equal to 0 and we obtain a homogeneous linear system
\[
\frac{\partial f_1(U_d)}{\partial u_i}\deg(f_2)f_2(U_d)-\sum_{j=1}^d\frac{\partial f_1(U_d)}{\partial u_j}y_j\frac{\partial f_2(U_d)}{\partial u_i}=0
\]
of $d$ equations, for $i=1,\ldots,d$, with unknowns $\partial f_1(U_d)/\partial u_j$, $j=1,\ldots,d$. The matrix of this system is
\[
\left(\begin{matrix}
\deg(f_2)f_2-u_1\partial f_2/\partial u_1&-u_2\partial f_2/\partial u_1&\cdots&-u_d\partial f_2/\partial u_1\\
-u_1\partial f_2/\partial u_2&\deg(f_2)f_2-u_2\partial f_2/\partial u_2&\cdots&-u_d\partial f_2/\partial u_2\\
\vdots&\vdots&\ddots&\vdots\\
-u_1\partial f_2/\partial u_d&-u_2\partial f_2/\partial u_d&\cdots&\deg(f_2)f_2-u_d\partial f_2/\partial u_d\\
\end{matrix}\right).
\]
Since $f_2(U_d)\not=0$, at least one partial derivative if nonzero. Let, for example, $\partial f_2/\partial u_d\not=0$.
Subtracting the last row of the system multiplied by $(\partial f_2/\partial u_i)/(\partial f_2/\partial u_d)$ from the $i$-th row,
we obtain an equivalent system with a matrix
\[
\left(\begin{matrix}
\deg(f_2)f_2&0&\cdots&0&\ast\\
0&\deg(f_2)f_2&\cdots&0&\ast\\
\vdots&\vdots&\ddots&\vdots&\vdots\\
0&0&\cdots&\deg(f_2)f_2&\ast\\
-u_1\partial f_2/\partial u_d&-u_2\partial f_2/\partial u_d&\cdots&-u_{d-1}\partial f_2/\partial u_d&\deg(f_2)f_2-u_d\partial f_2/\partial u_d\\
\end{matrix}\right).
\]
Since $f_2(U_d)\not=0$, the rank of the matrix is $\geq d-1$.
Since the system has a nonzero solution $\partial f_2/\partial u_j=\partial f_1/\partial u_j$, $j=1,\ldots,d$, the rank is
$d-1$. Hence all solutions of the system are
\[
\frac{\partial f_2}{\partial u_j}=\frac{p(U_d)}{q(U_d)}\frac{\partial f_1}{\partial u_j},\quad j=1,\ldots,d,
\]
for some polynomials $p(U_d),q(U_d)\in S(KU_d)$. Therefore,
\[
J_{ij}(f_1(U_d),f_2(U_d))=0,\quad i,j=1,\ldots,d,
\]
which contradicts with the algebraic independence of $f_1(U_d)$ and $f_2(U_d)$.
Clearly, the form of the element
\[
[f_1(A_d+U_d),f_2(A_d+U_d)]=\sum_{i<j}(a_iu_j-a_ju_i)f_{ij}(U_d),\quad f_{ij}(U_d)\in S(KU_d),
\]
shows that $[f_1(A_d+U_d),f_2(A_d+U_d)]$ is a Lie element and belongs to $F_d'$.
\end{proof}

\subsection{Invariant theory of the special linear group.}
All necessary information on representation theory of the special linear group $SL_2(K)$ and the general linear group
$GL_2(K)$ and on invariant theory of $SL_2(K)$ can be found in many books, see e.g., \cite{Sp, W}. For
a background on symmetric functions see, e.g., \cite{Md}.
In our considerations we combine ideas of De Concini, Eisenbud, Procesi \cite{DEP} and for the noncommutative generalizations of
Almkvist, Dicks, Formanek \cite{ADF}, see \cite{BBD} for details.
Every rational representation
$\rho:SL_2(K)\to GL_d(K)$ of $SL_2(K)$ is a direct sum of irreducible representations. For each nonnegative integer $p$
there exists a unique irreducible rational $(p+1)$-dimensional $SL_2(K)$-representation $\rho_p$.
The corresponding $SL_2(K)$-module is $V_p=S^p(K^2)$. If the vector space $K^2$ has a basis $\{e_1,e_2\}$, then
$V_p$ has a basis $\{e_1^p,e_1^{p-1}e_2,\ldots,e_1e_2^{p-1},e_2^p\}$. We equip $V_p$ also with a structure of a $GL_2(K)$-module
and use the same notation $\rho_p$ for the corresponding $GL_2(K)$-representation.
If
\[
g=\left(\begin{matrix}
\gamma_{11}&\gamma_{12}\\
\gamma_{21}&\gamma_{22}\\
\end{matrix}\right)\in GL_2(K).
\]
then
\[
g:\sum_{j=0}^p\alpha_je_1^{p-j}e_2^j\to
\sum_{j=0}^p\alpha_j(\gamma_{11}e_1+\gamma_{21}e_2)^{p-j}(\gamma_{12}e_1+\gamma_{22}e_2)^j,\quad \alpha_1,\alpha_2\in K.
\]

\begin{example}\label{action of g1 and g2}
The matrices
\[
g_1=\left(\begin{matrix}
1&1\\
0&1\\
\end{matrix}\right),
g_2=\left(\begin{matrix}
1&0\\
1&1\\
\end{matrix}\right)
\]
in $SL_2(K)$ act on $V_p$ by
\[
g_1(e_1^{p-j}e_2^j)=\sum_{l=0}^j\binom{j}{l}e_1^{p-l}e_2^l,\quad j=0,1,\ldots,p,
\]
\[
g_2(e_1^je_2^{p-j})=\sum_{l=0}^j\binom{j}{l}e_1^le_2^{p-l},\quad l=0,1,\ldots,p.
\]
\end{example}

If $KY_d$ is isomorphic to a direct sum $V_{p_1}\oplus\cdots\oplus V_{p_r}$, we may
change linearly the variables and assume that the bases of the $SL_2(K)$-submodules $V_{p_i}$ of $KY_d$
are subsets of $Y_d$. If $KY_{p+1}=V_p$ we shall identify $y_1,y_2,\ldots,y_{p+1}$ with $e_1^p,e_1^{p-1}e_2,\ldots,e_2^p$, respectively.

The following statement is a part of classical invariant theory, see \cite[p. 318 in the 1893 original]{H}.
A modern proof can be found, e.g., in \cite[p. 65]{Sp}. We restate it for $S(V_p)^{SL_2(K)}$.

\begin{proposition}\label{transcendence degree of invariants}
The transcendence degree of the algebra of invariants $S(V_p)^{SL_2(K)}$ for the $SL_2(K)$-module $V_p$ is
\[
\text{\rm transc.deg}(S(V_p)^{SL_2(K)})=\begin{cases}
0, \text{ if } p=1;\\
1, \text{ if } p=2;\\
p-2, \text{ if } p>2.\\
\end{cases}
\]
\end{proposition}

As in the case of rational representations of $SL_2(K)$, every polynomial
representation of $GL_2(K)$ is a direct sum of irreducible representations. Every irreducible representation is of the form
${\det}^l\otimes \rho_p$, where $\det:GL_2(K)\to K$ is defined by $\det:g\to\det(g)$, $g\in GL_2(K)$. Clearly, the representation
${\det}^l\otimes \rho_p$ is the same as the representation indexed by the partition $(p+l,l)$.
If the $SL_2(K)$-module $KY_d$ is of the form
\[
KY_d=V_{p_1}\oplus\cdots\oplus V_{p_r},
\]
we extend the action of $SL_2(K)$ to the action of $GL_2(K)$ and then extend
it diagonally on the polynomial algebra $S(KY_d)$:
\[
g(f(Y_d))=g(f(y_1,\ldots,y_d))=f(g(y_1),\ldots,g(y_d)),\quad g\in GL_2(K),f(Y_d)\in S(KY_d).
\]
The homogeneous component $S(KY_d)_n$ of degree $n$ is a direct sum of
$GL_2(K)$-modules
\[
S(KY_d)_n=\sum_{l,p\geq 0}m_n(p,l)({\det}^l\otimes V_p),
\]
where the nonnegative integer $m_n(p,l)$ is the multiplicity of $\det^l\otimes V_p$ in $S(KY_d)_n$.

The following well known lemma, see e.g., \cite{DEP}, describes the $SL_2(K)$- and $UT_2(K)$-invariants of polynomial $GL_2(K)$-modules.

\begin{lemma}\label{invariants of SL2 and UT2}
Let the (finite dimensional) polynomial $GL_2(K)$-module $V$ decompose as
\[
V=\sum_{l,p\geq 0}m(p,l)({\det}^l\otimes V_p).
\]

{\rm (i)} The subspace $V^{SL_2(K)}$ of the $SL_2(K)$-invariants in $V$ is spanned by the one-dimensional $GL_2(K)$-submodules
${\det}^l\otimes V_0$.

{\rm (ii)} Each summand $m(p,l)({\det}^l\otimes V_p)$ of $V$ has $m(p,l)$ linearly independent elements $w_{l}^{(i)}$,
$i=1,2,\ldots,m(p,l)$, (one in each irreducible component ${\det}^l\otimes V_p$) such that the vector space $V^{UT_2(K)}$ has a basis
\[
\{w_{pl}^{(i)}\mid i=1,2,\ldots,m(p,l), p,l\geq 0\}.
\]
\end{lemma}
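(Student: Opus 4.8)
The plan is to reduce everything to the case of a single irreducible summand, using that for any subgroup $H\leq GL_2(K)$ the invariants functor $(-)^H$ commutes with finite direct sums. Since $V=\sum_{l,k}m(k,l)({\det}^l\otimes V_k)$ is a decomposition of $GL_2(K)$-modules, it is in particular a decomposition as $SL_2(K)$- and as $UT_2(K)$-modules, so $V^{SL_2(K)}$ (respectively $V^{UT_2(K)}$) is the direct sum of the invariant subspaces of the individual components. Hence it suffices to determine the invariants inside one copy of ${\det}^l\otimes V_k$. The crucial simplification is that every matrix in $SL_2(K)$ and in $UT_2(K)$ has determinant $1$, so the one-dimensional factor ${\det}^l$ acts trivially upon restriction to either subgroup; thus, as an $SL_2(K)$-module (respectively $UT_2(K)$-module), the summand ${\det}^l\otimes V_k$ is isomorphic to $V_k$.

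For part (i), I would argue that $V_k^{SL_2(K)}=0$ whenever $k>0$: a nonzero $SL_2(K)$-invariant vector would span a trivial one-dimensional submodule, contradicting the irreducibility of $V_k$ for $k\geq 1$ (whose dimension is $k+1>1$). When $k=0$ the module $V_0$ is trivial and coincides with its own space of invariants. Consequently only the summands ${\det}^l\otimes V_0$ contribute, and these are exactly the one-dimensional $GL_2(K)$-submodules on which $SL_2(K)$ acts trivially; collecting them shows that $V^{SL_2(K)}$ is spanned by the submodules ${\det}^l\otimes V_0$, as asserted.

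For part (ii), I would compute the $UT_2(K)$-invariants of $V_k$ directly from the action recorded in Example \ref{action of g1 and g2}. Writing $v=\sum_{l=0}^k c_l\xi_l$ and imposing $g_1(v)=v$, the formula $g_1(\xi_l)=\sum_{j=0}^l\binom{l}{j}\xi_j$ gives, after comparing coefficients of each $\xi_j$, the conditions $\sum_{l=j+1}^k\binom{l}{j}c_l=0$ for $j=0,\ldots,k-1$. Reading these from $j=k-1$ downward and using that $K$ has characteristic $0$ forces $c_k=c_{k-1}=\cdots=c_1=0$, leaving only $c_0$ free, so $V_k^{UT_2(K)}=K\xi_0$ is one-dimensional. (Invariance under the single element $g_1$ is equivalent to invariance under all of $UT_2(K)$, since in characteristic $0$ a vector fixed by the unipotent $g_1$ is annihilated by the corresponding nilpotent operator and hence fixed by the whole one-parameter group.) Therefore each irreducible component ${\det}^l\otimes V_k$ of $V$ carries a one-dimensional space of $UT_2(K)$-invariants spanned by the image of its highest weight vector; choosing such a nonzero $w_{kl}^{i}$ in the $i$-th copy yields $\sum_{k,l}m(k,l)$ linearly independent invariants whose span is all of $V^{UT_2(K)}$ by the summand-by-summand reduction, giving the desired basis.

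The main obstacle is the explicit verification in part (ii) that $V_k^{UT_2(K)}$ is exactly one-dimensional rather than merely nonzero; everything else follows formally from the direct-sum decomposition and the triviality of $\det$ on the two subgroups. This one-dimensionality is the standard fact that the invariants of the unipotent radical in an irreducible representation form the highest weight line, but the triangular linear system above makes the argument self-contained.
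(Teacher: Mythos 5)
Your proof is correct. The paper itself gives no proof of this lemma --- it is stated as well known with a reference to \cite{DEP} --- so there is nothing to compare against; your argument (reduce to a single irreducible summand, note that $\det^l$ restricts trivially to $SL_2(K)$ and $UT_2(K)$, kill $V_k^{SL_2(K)}$ for $k\geq 1$ by irreducibility, and solve the triangular system coming from $g_1(\xi_l)=\sum_{j\leq l}\binom{l}{j}\xi_j$ to get $V_k^{UT_2(K)}=K\xi_0$) is the standard one and is complete, the only implicit ingredients being that $V_k$ remains irreducible upon restriction to $SL_2(K)$ and that fixedness under $g_1$ is equivalent to fixedness under the full one-parameter group $UT_2(K)$, the latter of which you justify and which the paper itself records in Remark \ref{how to recognize invariants}.
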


The above lemma implies that the homogeneous component
$(S(KY_d)^{SL_2(K)})_n$ of degree $n$ of the algebra of $SL_2(K)$-invariants $S(KY_d)^{SL_2(K)}$ is
\[
(S(KY_d)^{SL_2(K)})_n=\sum_{l\geq 0}m_n(0,l)({\det}^l\otimes V_0)
\]
and the Hilbert series of $S(KY_d)^{SL_2(K)}$ is
\[
H(S(KY_d)^{SL_2(K)},z)=\sum_{n\geq 0}\sum_{l\geq 0}m_n(0,l)z^n.
\]
Let $D_2(K)$ be the diagonal subgroup of $GL_2(K)$. Then ${\det}^l\otimes V_p$ has a basis $\{v_0,v_1,\ldots,v_p\}$ of eigenvectors
of $D_2(K)$ and
\[
\left(\begin{matrix}
\alpha_1&0\\
0&\alpha_2\\
\end{matrix}\right):v_j\to\alpha_1^{p+l-j}\alpha_2^{l+j}v_j,\quad 0\not=\alpha_1,\alpha_2\in K.
\]
The element $v_0$ is equal up to a multiplicative constant
to the element $w_{pl}\in {\det}^l\otimes V_p$ from Lemma \ref{invariants of SL2 and UT2} (ii).
The corresponding character of $D_2(K)$ is
\[
\chi({\det}^l\otimes V_p):\left(\begin{matrix}
\alpha_1&0\\
0&\alpha_2\\
\end{matrix}\right)\to S_{(p+l,l)}(\alpha_1,\alpha_2),
\]
where
\[
S_{(p+l,l)}(t_1,t_2)=(t_1t_2)^l(t_1^p+t_1^{p-1}t_2+\cdots+t_1t_2^{p-1}+t_2^p
\]
is the Schur function. This defines a ${\mathbb Z}^2$-grading (or bigrading) on ${\det}^l\otimes V_p$ and hence on $S(KY_d)$. The Hilbert series
of $S(KY_d)$ which counts both this bigrading and the usual grading is
\[
H_{GL_2(K)}(S(KY_d),t_1,t_2,z)=\sum_{l,p,n\geq 0}m_n(p,l)S_{(p+l,l)}(t_1,t_2)z^n.
\]
The Hilbert series
\[
H(S(KY_d),T_d)=H(S(KY_d),t_1,\ldots,t_d)=\prod_{i=1}^d\frac{1}{1-t_i}
\]
of the ${\mathbb Z}^d$-graded algebra $S(KY_d)$ and $H_{GL_2(K)}(S(KY_d),t_1,t_2,z)$ are related in the following way.
If $\{y_j,y_{j+1},\ldots,y_{j+p_i}\}$ is the basis of $V_{p_i}$, we replace the variables $t_j,t_{j+1},\ldots,t_{j+p_i}$, respectively,
with $t_1^{p_i}z,t_1^{p_i-1}t_2z,\ldots,t_2^{p_i}z$. Then
\[
H_{GL_2(K)}(S(KY_d),t_1,t_2,z)=H(S(KY_d),t_1^{p_1}z,t_1^{p_1-1}t_2z,\ldots,t_2^{p_r}z).
\]
There are many methods to compute the Hilbert series of $S(KY_d)^{SL_2(K)}$. We have chosen the following.
We define the multiplicity series of $H_{GL_2(K)}(S(KY_d),t_1,t_2,z)$
\[
S(KY_d),t_1,t_2,z)=\sum_{l,p,n\geq 0}m_n(p,l)t_1^p(t_1t_2)^lz^n,
\]
which is the generating function of the multiplicities $m_n(p,l)$ of the irreducible $GL_2(K)$-components in $S(KY_d)_n$.
We introduce new variables $t=t_1$, $u=t_1t_2$ and rewrite $M_{GL_2(K)}(S(KY_d),t_1,t_2,z)$ in the form
\[
M'_{GL_2(K)}(S(KY_d),t,u,z)=\sum_{l,p,n\geq 0}m_n(k,l)t^pu^lz^n.
\]
Then
\[
H(S(KY_d)^{SL_2(K)},z)=M'_{GL_2(K)}(S(KY_d),0,u,z)
\]
is the component of $M'_{GL_2(K)}(S(KY_d),t,u,z)$ which does not depend on $t$.
For the computing of $M_{GL_2(K)}(S(KY_d),t_1,t_2,z)$ and $M'_{GL_2(K)}(S(KY_d),t,u,z)$ we have used the methods in \cite{BBD}
which origin from classical work of Elliott \cite{E} and MacMahon \cite{MM}.
But it is much easier to verify that the formal power series $f(t_1,t_2,z)$ is equal to $M_{GL_2(K)}(S(KY_d),t_1,t_2,z)$.
It is sufficient to see whether
\[
H_{GL_2(K)}(S(KY_d),t_1,t_2,z)=\frac{1}{t_1-t_2}(t_1f(t_1,t_2,z)-t_2f(t_2,t_1,z)).
\]

\begin{remark}\label{how to recognize invariants}
Let $v$ be an element of the polynomial $GL_2(K)$-module $V$. In order to check that $v$ is $SL_2(K)$-invariant it is
not necessary to check that $g(v)=v$ for all $g\in SL_2(K)$. It is well known that it is enough to see this only for the matrices
$g_1$ and $g_2$ from Example \ref{action of g1 and g2}.
The matrices $g_1$ and $g_2$ are unitriangular. Hence the matrices $g_i-1$, $i=1,2$, act as nilpotent operators on $V$ and the logarithm
\[
\delta_i=\log(g_i)=\sum_{n\geq 1}(-1)^{n-1}\frac{(g_i-1)^n}{n},\quad i=1,2,
\]
is well defined.
One may extend the action of $\delta_i$ on $V$ to a derivation of $S(V)$ by the rule
$\delta(uv)=\delta(u)v+u\delta(v)$, $u,v\in S(V)$, and then by linearity on the whole $S(V)$.
Then $f\in S(V)$ is an $SL_2(K)$-invariant if and only if $\delta_i(f)=0$, $i=1,2$. This verification is much simpler than the checking that
$g_i(f)=f$, $i=1,2$. Similar arguments work for the tensor algebra on $V$ (which is isomorphic to the free associative algebra $K\langle V\rangle$),
for the free Lie algebra on $V$, etc.
\end{remark}

\begin{example}\label{invariants of second degree}
Consider the element
\[
w=\sum_{i=0}^p\binom{p}{i}(-1)^ie_1^{p-j}e_2^j\otimes e_1^je_2^{p-i}\in V_p\otimes V_p.
\]
This element is bihomogeneous of degree $(p,p)$.
Direct verification shows that $g_1(w)=g_2(w)=w$ (or, equivalently, $w$ belongs to the kernel of the derivations $\log(g_1)$
and $\log(g_2)$ of the tensor algebra on $V_p$). Hence
$w$ belongs to $(V_p\otimes V_p)^{SL_2(K)}$.
Hence $w$ generates an irreducible $GL_2(K)$-submodule of $V_p\otimes V_p$ isomorphic to $\det^p\otimes V_0$.
The explicit form of $w$ shows immediately that $w$ belongs to the direct summand $S^2(V_p)$ for $p$ even
and to the summand $\Lambda^2(V_p)$ of $V_p\otimes V_p$ for $p$ is odd.

Identifying $e_1^p,e_1^{p-1}e_2,\ldots,e_2^p\in V_p$ with $y_1,y_2,\ldots,y_{p+1}\in KY_{p+1}$, respectively,
we identify $V_p$ and $KY_{p+1}$. Then for $p=2q$ even the element $w\in S^2(V_p)$ becomes
\[
w_{p+1}=\sum_{i=1}^{p+1}\binom{p}{i-1}(-1)^{i-1}y_iy_{p+2-i}
\]
\[
=2\sum_{i=1}^q\binom{p}{i-1}(-1)^{i-1}y_iy_{p+2-i}+\binom{p}{q}(-1)^qy_{q+1}^2\in S(KY_{p+1})^{SL_2(K)}.
\]
Similarly, for $p=2q+1$ odd we identify $\Lambda^2(V_p)$ and the $GL_2(K)$-module $[KY_{p+1},KY_{p+1}]$
and obtain the $SL_2(K)$-invariants

\[
w_{q+1}=\sum_{i=1}^{p+1}\binom{p}{i-1}(-1)^{i-1}[y_i,y_{p+2-i}]
\]
\[
=2\sum_{i=1}^{q+1}\binom{p}{i-1}(-1)^{i-1}[y_i,y_{p+2-i}]\in (F_{p+1}')^{SL_2(K)}.
\]
\end{example}

\subsection{Noncommutative invariant theory for arbitrary groups.}
Let $\mathfrak V$ be a variety of $K$-algebras. We shall assume that $\mathfrak V$ is a variety of Lie algebras.
Since the base field $K$ is of characteristic 0, and hence infinite, the relatively free algebra $F_d({\mathfrak V})$
is ${\mathbb Z}^d$-graded. The action of the group $GL_d(K)=GL(KY_d)$ on $KY_d$ is extended diagonally on $F_d({\mathfrak V})$:
\[
g(f(Y_d))=g(f(y_1,\ldots,y_d))=f(g(y_1),\ldots,g(y_d)),\quad g\in GL_d(K),\, f(Y_d)\in F_d({\mathfrak V}).
\]
Let $G$ be a subgroup of $GL_d(K)=GL(KY_d)$. Then the algebra of $G$-invariants is
\[
F_d({\mathfrak V})^G=\{f(Y_d)\in F_d({\mathfrak V})\mid g(f(Y_d))=f(Y_d) \text{ for all } g\in D\}.
\]

The following lemma is well known, see e.g., \cite[Proposition 4.2]{DG} for the case of varieties of associative algebras.

\begin{lemma}\label{lifting invariants}
Let $\mathfrak V$ and $\mathfrak W$ be two varieties of algebras and ${\mathfrak V}\subset {\mathfrak W}$.
Let $G$ be a subgroup of $GL_d(K)=GL(KY_d)$.
Then the natural homomorphism $F_d({\mathfrak W})\to F_d({\mathfrak V})$ maps $F_d({\mathfrak W})^G$ onto $F_d({\mathfrak V})^G$.
Hence, if the algebra $F_d({\mathfrak V})^G$ is not finitely generated, then the same is $F_d({\mathfrak W})^G$.
\end{lemma}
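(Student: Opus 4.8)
The plan is to exploit the reductivity of the ambient group $GL_d(K)$ rather than any property of the (possibly non-reductive) subgroup $G$. First I would construct the natural homomorphism $\phi\colon F_d(\mathfrak{W})\to F_d(\mathfrak{V})$. Since $\mathfrak{V}\subset\mathfrak{W}$, the algebra $F_d(\mathfrak{V})$ belongs to $\mathfrak{W}$, so by the universal property of the relatively free algebra $F_d(\mathfrak{W})$ the assignment $x_i\mapsto x_i$ extends to an algebra homomorphism $\phi$, which is surjective because it sends generators to generators. Then I would verify that $\phi$ is $GL_d(K)$-equivariant: for $g\in GL_d(K)$ the element $g(x_i)=\sum_j g_{ji}x_j$ is a linear combination of generators, hence $\phi(g(x_i))=g(x_i)$ holds in $F_d(\mathfrak{V})$, and since $\phi$ is a homomorphism one gets $\phi(g(f))=g(\phi(f))$ for every $f\in F_d(\mathfrak{W})$. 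In particular $\phi$ is equivariant for the subgroup $G$.

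Next I would bring in the grading. As $K$ is infinite, both $F_d(\mathfrak{W})$ and $F_d(\mathfrak{V})$ are graded by total degree, the map $\phi$ preserves this grading, and each homogeneous component is finite dimensional. Thus for every $n$ the restriction $\phi_n\colon F_d(\mathfrak{W})_n\to F_d(\mathfrak{V})_n$ is a surjection of finite dimensional rational (indeed polynomial) $GL_d(K)$-modules, the module $F_d(\mathfrak{W})_n$ being a subquotient of $(KX_d)^{\otimes n}$.

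The heart of the argument is the splitting step. Because $K$ has characteristic $0$, the group $GL_d(K)$ is reductive, so every finite dimensional rational $GL_d(K)$-module is completely reducible; hence each surjection $\phi_n$ admits a $GL_d(K)$-equivariant section $s_n\colon F_d(\mathfrak{V})_n\to F_d(\mathfrak{W})_n$. Assembling the $s_n$ over all $n$ gives a $GL_d(K)$-equivariant linear section $s$ with $\phi\circ s=\mathrm{id}$. The key point, and the place where a naive approach breaks down, is that for an arbitrary subgroup $G$ a $G$-equivariant section need not exist; here we circumvent this by splitting with respect to the whole reductive group $GL_d(K)$, after which $s$ is automatically $G$-equivariant. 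Consequently $s$ sends $G$-invariants to $G$-invariants, so for any $\bar f\in F_d(\mathfrak{V})^G$ the element $s(\bar f)$ lies in $F_d(\mathfrak{W})^G$ and satisfies $\phi(s(\bar f))=\bar f$; this shows that $\phi$ maps $F_d(\mathfrak{W})^G$ onto $F_d(\mathfrak{V})^G$.

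The final assertion is then immediate: $\phi$ restricts to a surjective homomorphism $F_d(\mathfrak{W})^G\to F_d(\mathfrak{V})^G$, and a surjective homomorphic image of a finitely generated algebra is finitely generated, so if $F_d(\mathfrak{V})^G$ is not finitely generated then neither is $F_d(\mathfrak{W})^G$. I expect the only genuinely delicate step to be the equivariant splitting, whose resolution is precisely the observation that one should invoke the reductivity of $GL_d(K)$ instead of attempting to work directly with the arbitrary group $G$.
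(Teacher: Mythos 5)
Your proof is correct, and it is essentially the standard argument behind this lemma: the paper itself gives no proof, merely calling the statement well known and citing \cite[Proposition 4.2]{DG}, and the argument there is precisely your splitting of the graded surjection via complete reducibility of the polynomial $GL_d(K)$-modules $F_d({\mathfrak W})_n$ in characteristic $0$, which yields a $GL_d(K)$-equivariant (hence $G$-equivariant) section carrying $G$-invariants to $G$-invariants. Your identification of the equivariant splitting over the full reductive group $GL_d(K)$, rather than over the arbitrary subgroup $G$, as the crux is exactly right.
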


As a result of the dichotomy of the varieties of Lie algebras (either $\mathfrak V$ contains ${\mathfrak A}^2$
or $\mathfrak V$ satisfies the Engel identity and is nilpotent), the $G$-invariants of the free metabelian Lie algebra are the key object
in the study of the finite generation of $F_d({\mathfrak V})^G$. We give a couple of cases when the algebra
$F_d({\mathfrak A}^2)^G$ is not finitely generated. We shall assume that $GL_d(K)$ acts in the same way on the vector spaces
$KY_d$, $KA_d$, and $KU_d$. If $g\in GL_d(K)$, then
\[
g(y_j)=\sum_{i=1}^d\alpha_{ij}y_i,\quad g(a_j)=\sum_{i=1}^d\alpha_{ij}a_i,\quad
g(u_j)=\sum_{i=1}^d\alpha_{ij}u_i,\quad j=1,\ldots,d,
\]
with the same $\alpha_{ij}$ for $Y_d$, $A_d$, and $U_d$.

\begin{lemma}\label{the invariants are not finitely generated - 1}
Let the subgroup $G$ of $GL_d(K)=GL(KY_d)$ be such that
the vector space $KY_d$ has no $G$-fixed elements (i.e., $KY_d^G=0$)
and the algebras $S(KY_d)^G$ and $F_d({\mathfrak A}^2)^G$ are nontrivial (i.e., $S(KY_d)^G\not=K$ and $F_d({\mathfrak A}^2)^G\not=0$).
Then the Lie algebra $F_d({\mathfrak A}^2)^G$ is not finitely generated.
\end{lemma}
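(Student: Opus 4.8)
The plan is to reduce the non-finite-generation to the infinite-dimensionality of the invariants living inside the commutator ideal, exploiting that $F_d'$ is abelian. First I would record that the grading $F_d=(F_d)_1\oplus F_d'$ with $(F_d)_1=KX_d$ is a decomposition into $G$-submodules, since $G$ acts linearly and preserves degree. Passing to $G$-invariants and using the hypothesis $(KX_d)^G=0$ gives $F_d^G=(F_d')^G$. Thus every $G$-invariant of $F_d$ lies in the commutator ideal $F_d'$.

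The key structural observation is that, by the metabelian identity, $F_d'$ is an abelian ideal, $[F_d',F_d']=0$. Consequently any Lie subalgebra of $F_d$ contained in $F_d'$ coincides with the $K$-linear span of its elements. In particular, if $F_d^G=(F_d')^G$ were generated as a Lie algebra by finitely many elements $u_1,\ldots,u_m$ (which necessarily lie in $F_d'$), then all brackets among them would vanish and we would get $F_d^G=\text{span}_K(u_1,\ldots,u_m)$, a finite-dimensional space. Hence it suffices to prove that $(F_d')^G$ is infinite-dimensional.

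To produce infinitely many independent invariants I would use the $K[X_d]$-module structure on $F_d'$. Since $G$ acts compatibly with this structure --- one checks $g(u\cdot p)=g(u)\cdot g(p)$ for $g\in G$, $u\in F_d'$, $p\in K[X_d]$, starting from $u\cdot x_j=[u,x_j]$ and extending multiplicatively --- the invariant space $(F_d')^G$ is a module over $K[X_d]^G$. Because $G$ preserves the grading, both $(F_d')^G$ and $K[X_d]^G$ are graded, so by the hypotheses I may pick a nonzero homogeneous $u\in (F_d')^G$ and a homogeneous $f\in K[X_d]^G$ with $\deg f\geq 1$. Then $uf^n\in (F_d')^G$ for all $n\geq 0$. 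It remains to see that these elements are linearly independent. In the Shmel'kin embedding one writes $u=\sum_i a_ig_i(Y_d)$ with $\sum_i y_ig_i(Y_d)=0$, and the module action of $p$ is multiplication of the coefficients by $p(Y_d)$; since $K[Y_d]$ is an integral domain, $uf^n\neq 0$ whenever $u\neq 0$, i.e.\ $F_d'$ is torsion-free over $K[X_d]$. As $u$ and $f$ are homogeneous, the elements $uf^n$ have pairwise distinct degrees $\deg u+n\deg f$, so they are linearly independent. Therefore $(F_d')^G$ is infinite-dimensional, contradicting the finite-dimensionality forced by finite generation.

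The only genuinely technical point, and the place where I expect to argue carefully, is the torsion-freeness of $F_d'$ as a $K[X_d]$-module, namely that multiplication by a nonzero polynomial cannot annihilate a nonzero element. The abelian wreath product description of $KA_d\text{\rm wr}KY_d$ makes this transparent, reducing it to the absence of zero divisors in $K[Y_d]$; everything else is bookkeeping with the grading and with the abelianness of $F_d'$.
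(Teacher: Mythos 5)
Your proposal is correct and follows essentially the same route as the paper: reduce to showing $(F_d')^G$ is infinite-dimensional (using that $(KX_d)^G=0$ forces $F_d^G\subset F_d'$, which is abelian), then produce the independent invariants $uf^n$ from a nonzero homogeneous $u\in(F_d')^G$ and a positive-degree homogeneous $f\in K[X_d]^G$. The only cosmetic difference is that the paper verifies the torsion-freeness $uf^n\neq 0$ inside the Poisson algebra $R_d$, whereas you argue directly in the Shmel'kin wreath-product picture; both amount to the same observation that the coefficients live in the integral domain $K[Y_d]$.
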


\begin{proof}
Since the algebra $F_d^G=F_d({\mathfrak A}^2)^G$ is graded, the condition $KY_d^G=0$ gives that $F_d^G\subset F_d'$.
Hence the algebra $F_d^G$ is abelian and it is sufficient to show that it is infinite dimensional.
Working in the Poisson algebra $R_d$ from Subsection \ref{subsection on Poisson algebras}, if $w\in R_d'=[R_d,R_d]$ and
$f\in S(KU_d)\subset R_d$ are homogeneous nonzero elements, then the element $wf$ is also nonzero. Starting with a nonzero
homogeneous $w\in (F_d')^G$
and with a homogeneous polynomial of positive degree $f\in S(KU_d)^G\cong S(KY_d)^G$, we obtain an infinite sequence of elements
$wf^n\in F_d'$, $n=0,1,2,\ldots$ which are linearly independent in $R_d$ because are of different degrees. Since
$g(wf^n)=g(w)g(f)^n=wf^n$ for all $g\in G$, we conclude that $wf^n\in (F_d')^G$ and complete the proof.
\end{proof}

\begin{proposition}\label{the invariants are not finitely generated - 2}
Let $KY_d^G=0$ for the subgroup $G$ of $GL_d(K)=GL(KY_d)$ and let the transcendence degree of $S(KY_d)^G$ be greater than $1$. Then
the Lie algebra $F_d({\mathfrak A}^2)^G$ is not finitely generated.
\end{proposition}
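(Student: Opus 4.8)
The plan is to deduce the statement from Lemma \ref{the invariants are not finitely generated - 1}, so I would verify its three hypotheses. The condition $(KX_d)^G=0$ is assumed. Since $\text{\rm transc.deg}(K[X_d]^G)>1>0$, the algebra $K[X_d]^G$ contains nonconstant elements and hence $K[X_d]^G\neq K$. The whole difficulty is concentrated in the remaining hypothesis $F_d({\mathfrak A}^2)^G\neq 0$, which I would establish by producing an explicit nonzero invariant inside the commutator ideal $F_d'$.

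To this end I would first extract two algebraically independent homogeneous invariants. Because $G$ acts on $KX_d$ by degree-preserving linear maps, each homogeneous component of an invariant is again invariant, so the algebra $K[X_d]^G$ is ${\mathbb Z}$-graded and is spanned by homogeneous elements. For a graded domain the transcendence degree of the fraction field is realized by a maximal algebraically independent subset of any generating set, hence by homogeneous elements; the hypothesis $\text{\rm transc.deg}(K[X_d]^G)>1$ therefore yields homogeneous $f_1,f_2\in K[X_d]^G$ that are algebraically independent in $K[X_d]$. I would then apply Proposition \ref{independent polynomials in R_d} to conclude that $\pi(f_1,f_2)=[f_1(A_d+Y_d),f_2(A_d+Y_d)]$ is a nonzero element of $F_d'$.

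The key point is that this element is $G$-invariant. Since $G$ acts by the same matrices $(\alpha_{ij})$ on $A_d$ and $Y_d$ as on $X_d$, every $g\in G$ sends $a_j+y_j$ to $\sum_i\alpha_{ij}(a_i+y_i)$, so $g$ acts on the set $A_d+Y_d$ exactly as it acts on $X_d$. The relations defining $R_d$ are preserved by this action, as one checks that $g(a_i)g(a_j)=0$, that $[g(a_i),g(a_j)]=[g(y_i),g(y_j)]=0$, and that $[g(a_i),g(y_j)]=g(a_i)g(y_j)$; hence $G$ acts on $R_d$ by automorphisms of the Poisson structure. Invariance of $f_1,f_2$ in $K[X_d]$ then gives $g(f_s(A_d+Y_d))=f_s(A_d+Y_d)$ for $s=1,2$, whence $g(\pi(f_1,f_2))=[g(f_1(A_d+Y_d)),g(f_2(A_d+Y_d))]=\pi(f_1,f_2)$. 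Thus $\pi(f_1,f_2)$ is a nonzero element of $(F_d')^G\subseteq F_d({\mathfrak A}^2)^G$, so $F_d({\mathfrak A}^2)^G\neq 0$.

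Having verified all three hypotheses, I would invoke Lemma \ref{the invariants are not finitely generated - 1} to conclude that $F_d({\mathfrak A}^2)^G$ is not finitely generated. I expect the main obstacle to lie not in the logical skeleton but in two technical checks: the graded-algebra fact that the transcendence degree is realized by homogeneous elements, and the verification that the $G$-action descends to Poisson automorphisms of $R_d$ so that $G$-invariance passes correctly through the bracket $\pi(\cdot,\cdot)$. Both are routine, but they must be stated carefully, since it is precisely the compatibility of $G$ with the $\pi$-construction that transports algebraic independence of the commutative invariants into the noncommutative statement $F_d({\mathfrak A}^2)^G\neq 0$.
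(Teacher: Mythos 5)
Your proposal is correct and follows essentially the same route as the paper: extract two algebraically independent homogeneous invariants $f_1,f_2$ from $K[X_d]^G$, use Proposition \ref{independent polynomials in R_d} to get the nonzero element $\pi(f_1,f_2)\in (F_d')^G$, and then conclude via Lemma \ref{the invariants are not finitely generated - 1}. The extra care you take with the graded realization of the transcendence degree and with checking that $G$ acts on $R_d$ by Poisson automorphisms is sound and only makes explicit what the paper leaves implicit.
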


\begin{proof}
Since $\text{transc.deg}(S(KY_d)^G)>1$, the algebra contains two algebraically independent homogeneous invariants $f_1$ and $f_2$.
Then the element
\[
\pi(f_1,f_2)=[f_1(A_d+U_d),f_2(A_d+U_d)]\in R_d
\]
belongs to $F_d'$ and is nonzero by Proposition \ref{independent polynomials in R_d}. Also, if $g\in G$, then
\[
g(f_1(A_d+U_d))=f_1(A_d+U_d), \quad g(f_2(A_d+U_d))=f_2(A_d+U_d).
\]
Hence
\[
g(\pi(f_1,f_2))=[g(f_1(A_d+U_d)),g(f_2(A_d+U_d))]=[f_1(A_d+U_d),f_2(A_d+U_d)]
\]
and $\pi(f_1,f_2)\in (F_d')^G$. Now the proof follows from Lemma \ref{the invariants are not finitely generated - 1}.
\end{proof}

Finally, computing the Hilbert series of the $SL_2(K)$-invariants of the free metabelian Lie algebra $F_d$, we may apply
exactly the same methods as for the $SL_2(K)$-invariants of $S(KY_d)$.
Starting with the decomposition of the $SL_2(K)$-module $KY_d$ and the Hilbert series $H(F_d,T_d)$ of $F_d$ we obtain the Hilbert series
\[
H_{GL_2(K)}(F_d,t_1,t_2,z)=\sum_{k,p,n\geq 0}m_{n,p,l}(F_d)S_{(p+l,l)}(t_1,t_2)z^n,
\]
the multiplicity series
\[
M_{GL_2(K)}(F_d,t_1,t_2,z)=\sum_{p,l,n\geq 0}m_{n,p,l}(F_d)t_1^p(t_1t_2)^lz^n,
\]
\[
M'_{GL_2(K)}(F_d,t,u,z)=\sum_{p,l,n\geq 0}m_{n,p,l}(F_d)t^pu^lz^n,
\]
and the Hilbert series
\[
H(F_d^{SL_2(K)},z)=M'_{GL_2(K)}(F_d,0,u,z)
\]
of $F_d^{SL_2(K)}$. See \cite{DDF} where such computations were performed for the multiplicity series of $F_d$.

\section{Infinite generation of the metabelian invariants\\
of the special linear group}

In this section we assume that $SL_2(K)$ acts rationally on the vector space $KY_d$ and
\[
KY_d\cong V_{p_1}\oplus\cdots\oplus V_{p_r},
\]
where $V_p\cong S^p(K^2)$ as an $SL_2(K)$-module.
When necessary we extend the $SL_2(K)$-action to $GL_2(K)$.
If $V'$ is an $SL_2(K)$-submodule of $KY_d$ we assume that it is spanned by a subset of $Y_d$.
We denote by $F_d$ the free metabelian Lie algebra generated by $Y_d$.

Following our agreement, if $KY_d$ has an $SL_2(K)$-invariant subspace we may assume that it has a basis $\{y_{m+1},\ldots,y_d\}$
and its complement has a basis $\{y_1,\ldots,y_m\}$. Then the subalgebra of $SL_2(K)$-invariants in $S(KY_d)$ has the form
\[
S(KY_d)^{SL_2(K)}=S(S(KY_m)^{SL_2(K)}\{y_{m+1},\ldots,y_d\}),
\]
where $S(S(KY_m)^{SL_2(K)}\{y_{m+1},\ldots,y_d\})$ is the symmetric $S(KY_m)^{SL_2(K)}$-algebra
on the free $S(KY_m)^{SL_2(K)}$-module generated by $\{y_{m+1},\ldots,y_d\}$.
We shall derive an analogue of this fact for the algebra $F_d^{SL_2(K)}$ which shows how to obtain the generators of
$F_d^{SL_2(K)}$ if we know the generators of $F_{d-1}^{SL_2(K)}$. The proof is based on a similar statement for the algebra
of $UT_2(K)$-invariants from \cite{DDF}.
We shall work in the Poisson algebra
\[
R_d=P(A_d,U_d)/(A_d^2,[A_d,A_d],[U_d,U_d],[a_i,y_j]-a_iy_j\mid i,j=1,\ldots,d)
\]
from Lemma \ref{Poisson structure on K[A,Y]}. The lemma gives that the Lie algebra $(KA_d)\text{\rm wr}(KU_d)$
is isomorphic to the Lie subalgebra of $R_d$ generated by $A_d$ and $U_d$. Hence the free metabelian Lie algebra $F_d$
embedded in $R_d$ as the Lie subalgebra generated by
\[
Y_d=A_d+U_d=\{y_i=a_i+u_i\mid i=1,\ldots,d\}.
\]
Clearly, the unitary associative subalgebra of $R_d$ generated by $Y_d$ is isomorphic to the symmetric algebra $S(KY_d)$.
By Proposition \ref{independent polynomials in R_d}, if $f_1(Y_d)$ and $f_2(Y_d)$ are two polynomials in
$S(KY_d)\subset R_d$, then the commutator $\pi(f_1(Y_d),f_2(Y_d))=[f_1(A_d+U_d),f_2(A_d+U_d)]$
belongs to the commutator ideal $F_d'$ of  $F_d\subset R_d$.

\begin{proposition}\label{invariants for d from these from d-1}
Let $y_d\in KY_d$ be $SL_2(K)$-invariant and let $KY_d=KY_{d-1}\oplus Ky_d$ as $SL_2(K)$-modules.
Let $\{v_i\mid i\in I\}$ and $\{w_j\mid j\in J\}$ be, respectively, homogeneous bases
of $(F_{d-1}')^{SL_2(K)}$ and $\omega(S(KY_{d-1}))^{SL_2(K)}$ with respect to the usual $\mathbb Z$-grading,
where $\omega(S(KY_{d-1}))$ is the augmentation ideal of $S(KY_{d-1})$. Then $(F_d')^{SL_2(K)}\subset R_d$ has a basis consisting of
\[
v_iu_d^n=v_i\text{\rm ad}^ny_d,\quad i\in I,\quad n\geq 0,
\]
\[
\pi(y_d,w_j)u_d^n=\sum_{l=1}^{d-1}[y_d,y_l]
\frac{\partial w_j}{\partial x_l}(\text{\rm ad}Y_{d-1})\text{\rm ad}^ny_d,\quad j\in J,\quad n\geq 0.
\]
\end{proposition}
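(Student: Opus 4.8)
The plan is to work entirely inside the Poisson algebra $R_d$, where by the Shmel'kin description every element of $F_d'$ has the form $\sum_{i=1}^d a_ih_i(Y_d)$ subject to the single relation $\sum_{i=1}^d y_ih_i(Y_d)=0$, and to exploit that $SL_2(K)$ fixes $a_d$ and $y_d$ while mixing $a_1,\dots,a_{d-1}$ (resp. $y_1,\dots,y_{d-1}$) among themselves. Two consequences organize the whole argument: the grading of $R_d$ by the degree in $y_d$ is $SL_2(K)$-stable, and $C_d=a_dK[Y_d]\oplus\bigl(\sum_{i<d}a_iK[Y_d]\bigr)$ is a decomposition into $SL_2(K)$-submodules. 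Hence for an invariant $w$ its $a_d$-component $a_dh_d$ is again invariant, so $h_d\in K[Y_d]^{SL_2(K)}=K[Y_{d-1}]^{SL_2(K)}[y_d]$, where I use that $y_d$ is invariant and transcendental over $K[Y_{d-1}]$.

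First I would check that the proposed elements lie in $(F_d')^{SL_2(K)}$: the $v_iy_d^n=v_i\,\mathrm{ad}^nx_d$ are invariant because $v_i\in(F_{d-1}')^{SL_2(K)}\subset(F_d')^{SL_2(K)}$ and $x_d$ is fixed, while $\pi(x_d,u_j)\in(F_d')^{SL_2(K)}$ by Proposition~\ref{independent polynomials in R_d} together with the invariance of $x_d$ and $u_j$, exactly as in Proposition~\ref{the invariants are not finitely generated - 2}; it is nonzero since $u_j$ is nonconstant. The decisive computation, used throughout, is that in $R_d$
\[
\pi(x_d,u_j)=\sum_{k<d}(a_dy_k-a_ky_d)\frac{\partial u_j}{\partial y_k}(Y_{d-1}),
\]
so by Euler's identity for the homogeneous $u_j$ the $a_d$-component of $\pi(x_d,u_j)y_d^n$ equals $a_d\,\deg(u_j)\,u_j(Y_{d-1})y_d^n$, whereas the $v_iy_d^n$ carry no $a_d$ at all.

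Linear independence then splits along this $a_d$-component. In a vanishing combination $\sum c_{i,n}v_iy_d^n+\sum c'_{j,n}\pi(x_d,u_j)y_d^n=0$ the $a_d$-part reads $\sum_{j,n}c'_{j,n}\deg(u_j)u_j(Y_{d-1})y_d^n=0$; since the $u_j$ are linearly independent in $K[Y_{d-1}]$ and the powers $y_d^n$ are independent over it, all $c'_{j,n}=0$, and then $\sum c_{i,n}v_iy_d^n=0$ forces $c_{i,n}=0$ because the $v_i$ are linearly independent in $R_{d-1}$ and multiplication by distinct $y_d^n$ preserves independence.

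For spanning — which I expect to be the crux — I take an arbitrary $w\in(F_d')^{SL_2(K)}$ and first annihilate its $a_d$-component $a_dh_d$. The metabelian relation $y_dh_d=-\sum_{i<d}y_ih_i$ places $y_dh_d$ in the prime ideal $(y_1,\dots,y_{d-1})$, which does not contain $y_d$, so $h_d\in(y_1,\dots,y_{d-1})$; intersecting with $K[Y_{d-1}]^{SL_2(K)}[y_d]$ yields $h_d=\sum_{j,n}\lambda_{j,n}u_j(Y_{d-1})y_d^n$ with coefficients in $\omega(K[Y_{d-1}])^{SL_2(K)}$. Subtracting $\sum_{j,n}(\lambda_{j,n}/\deg u_j)\pi(x_d,u_j)y_d^n$ removes the $a_d$-component while staying in the invariants, and the remainder $\tilde w=\sum_{i<d}a_i\tilde h_i(Y_d)$ has no $a_d$, so its relation collapses to $\sum_{i<d}y_i\tilde h_i=0$. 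Expanding $\tilde h_i=\sum_n\tilde h_i^{(n)}(Y_{d-1})y_d^n$ and comparing powers of the invariant $y_d$ shows each slice $w_n=\sum_{i<d}a_i\tilde h_i^{(n)}$ lies in $(F_{d-1}')^{SL_2(K)}$, hence is a combination of the $v_i$; reassembling $\tilde w=\sum_n w_ny_d^n$ expresses $w$ in the required form. The only genuinely delicate points are the membership $h_d\in(y_1,\dots,y_{d-1})$ forced by the metabelian identity and the exact matching of $a_d$-components via Euler's identity; once these are secured, the $y_d$-degree bookkeeping and the reduction to $F_{d-1}$ are routine.
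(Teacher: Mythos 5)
Your proof is correct, but it takes a genuinely different route from the paper's. The paper proves this proposition by viewing $UT_2(K)$ as a subgroup of $SL_2(K)$, quoting the basis of $(F_d')^{UT_2(K)}$ of exactly this shape from \cite[Theorem 4.3]{DDF} (with $v_i$, $u_j$ running over bases of the $UT_2(K)$-invariants), and then extracting the $SL_2(K)$-invariants as the bihomogeneous elements of bidegree $(p,p)$, which is equivalent to requiring $v_i\in (F_{d-1}')^{SL_2(K)}$ and $u_j\in\omega(K[X_{d-1}])^{SL_2(K)}$; the real combinatorial work is thus outsourced to the earlier paper. You instead argue directly inside $R_d$: the Shmel'kin membership criterion $\sum_i y_ih_i=0$, the $SL_2(K)$-stable splitting $C_d=a_dK[Y_d]\oplus\bigl(\textstyle\sum_{i<d}a_iK[Y_d]\bigr)$, Euler's identity to pin down the $a_d$-component of $\pi(x_d,u_j)$ as $a_d\deg(u_j)u_j(Y_{d-1})$, and primality of $(y_1,\dots,y_{d-1})$ to force $h_d$ into the augmentation ideal. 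All the individual steps check out (in particular the computation $\pi(x_d,u_j)=\sum_{k<d}(a_dy_k-a_ky_d)\partial u_j/\partial y_k$ and the $y_d$-degree slicing of the residual term $\tilde w$ back into $(F_{d-1}')^{SL_2(K)}$). What your version buys is self-containedness and generality: nothing in it uses $SL_2(K)$ beyond the hypotheses that $x_d$ spans a trivial summand and that the group preserves $KX_{d-1}$, so it yields the analogous basis for any such subgroup of $GL_d(K)$, whereas the paper's reduction is tied to the $UT_2(K)\subset SL_2(K)$ bidegree mechanism. What the paper's version buys is brevity and consistency with its overall strategy of passing between $UT_2(K)$- and $SL_2(K)$-invariants via multiplicity series.
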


\begin{proof}
We consider the group $UT_2(K)$ as a subgroup of $SL_2(K)$, both groups acting on $KY_d$. Hence
$UT_2(K)$ acts unitriangularly on the irreducible $SL_2(K)$-components of $KY_d$. Then the algebras of $UT_2(K)$-invariants
$S(KY_d)^{UT_2(K)}$ and $F_d^{UT_2(K)}$ coincide, respectively, with the algebras of constants (i.e., the kernels)
$S(KY_d)^{\delta_1}$ and $F_d^{\delta_1}$ of the Weitzenb\"ock (i.e., locally nilpotent linear) derivation
$\delta_1=\log(g_1)$. A basis of the vector space $(F_d')^{UT_2(K)}$ is given in \cite[Theorem 4.3]{DDF}.
We translate it in the language of our paper.
Let $\{v_i\mid i\in I\}$ and $\{w_j\mid j\in J\}$ be, respectively, homogeneous bases
of $(F_{d-1}')^{UT_2(K)}$ and $\omega(S(KY_{d-1}))^{UT_2(K)}$ with respect to both $\mathbb Z$- and ${\mathbb Z}^2$-gradings.
Then $(F_d')^{UT_2(K)}$ as a subspace of $(KA_d)\text{\rm wr}(KU_d)\subset R_d$ has a basis
\[
\{v_iu_d^n,\pi(y_d,w_j)u_d^n\mid i\in I,j\in J,n\geq 0\}.
\]
A bihomogeneous element in $S(KY_d)^{UT_2(K)}$ or in $F_d^{UT_2(K)}$ of bidegree $(p,q)$ is $SL_2(K)$-invariant if and only if $p=q$.
Since the elements $y_d,a_d,u_d$ are of bidegree $(0,0)$, the vector space $(F_d')^{SL_2(K)}$ has a basis consisting of the basis elements
$v_iu_d^n$ and $\pi(y_d,w_j)u_d^n$ of $(F_d')^{UT_2(K)}$ with the property that the bidegrees of $v_i$ and $w_j$
are equal to $(p_i,p_i)$ and $(q_j,q_j)$, respectively.
But this condition simply means that $v_i\in (F_{d-1}')^{SL_2(K)}$ and $w_j\in \omega(S(KY_{d-1}))^{SL_2(K)}$.
To complete the proof it is sufficient to see that
\[
v_iu_d^n=v_i\text{\rm ad}^ny_d,\quad i\in I,\quad n\geq 0,
\]
\[
\pi(y_d,w_j)y_d^n=\sum_{l=1}^{d-1}[y_d,y_l]
\frac{\partial w_j}{\partial y_l}(\text{\rm ad}Y_{d-1})\text{\rm ad}^ny_d\quad j\in J,\quad n\geq 0.
\]
\end{proof}

The proof of the following lemma is quite obvious and is similar to the proof of Lemma \ref{lifting invariants}.

\begin{lemma}\label{projection removing blocks}
Let $KY_d=V'\oplus V''$, where $V'=KY_e=K\{y_1,\ldots,y_e\}$ and $V''$ are $SL_2(K)$-submodules of $KY_d$
and let $F_e$ be the Lie subalgebra of $F_d$ generated by $Y_e$.
Then the natural projection $\nu:F_d\to F_e$ which send $y_{e+1},\ldots,y_d$ to $0$ maps $F_d^{SL_2(K)}$ onto $F_e^{SL_2(K)}$.
\end{lemma}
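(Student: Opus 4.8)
The statement asserts that the projection $\nu:F_d\to F_e$ maps the $SL_2(K)$-invariants onto the invariants of $F_e$. The key structural observation is that $\nu$ is a $GL_2(K)$-equivariant algebra homomorphism: since $V'=KX_e$ and $V''$ are $SL_2(K)$-submodules, sending $x_{e+1},\ldots,x_d$ to $0$ commutes with the diagonal action of any $g\in SL_2(K)$ (indeed of $GL_2(K)$). Consequently $\nu(F_d^{SL_2(K)})\subseteq F_e^{SL_2(K)}$ is automatic, and the whole content of the lemma is the \emph{surjectivity}. The natural approach is to exhibit a $GL_2(K)$-equivariant section, or more precisely to argue that $F_e$ is a $GL_2(K)$-submodule of $F_d$ onto which $\nu$ restricts to the identity.

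First I would make precise the inclusion $F_e\hookrightarrow F_d$ of the Lie subalgebra generated by $X_e=\{x_1,\ldots,x_e\}$, and note that because $V'$ is $SL_2(K)$-stable, the action of $SL_2(K)$ preserves $F_e$, so $F_e$ is itself a $GL_2(K)$-module and the embedding is $GL_2(K)$-equivariant. Then I would observe that $\nu\circ(\text{inclusion})=\mathrm{id}_{F_e}$, since $\nu$ fixes $x_1,\ldots,x_e$. Given any $w\in F_e^{SL_2(K)}$, viewing $w$ inside $F_d$ gives an element that is $SL_2(K)$-invariant in $F_d$ (invariance is an intrinsic property of $w$ as an element of the $GL_2(K)$-module $F_e\subseteq F_d$), and $\nu(w)=w$. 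This immediately yields surjectivity onto $F_e^{SL_2(K)}$.

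The one point that needs genuine care, rather than being a formality, is verifying that $w$, regarded as an element of $F_d$, is fixed by \emph{all} of $SL_2(K)$ and not merely by the subgroup that was used to define invariance in $F_e$. Here I would invoke the fact that $g\in SL_2(K)$ acts by the \emph{same} substitution rule on $F_e$ and on $F_d$, namely $g(x_j)=\sum_i\alpha_{ij}x_i$ with the $\alpha_{ij}$ determined by the $SL_2(K)$-module structure of $V'$, and that this substitution only involves $x_1,\ldots,x_e$ because $V'$ is $SL_2(K)$-invariant. Thus for $j\le e$ the expression $g(w)$ computed in $F_d$ coincides with $g(w)$ computed in $F_e$, so $g(w)=w$ for all $g\in SL_2(K)$.

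The main obstacle is therefore conceptual rather than computational: one must be careful that the grading and the module structure are compatible, i.e. that $\nu$ respects the $\mathbb{Z}$-grading (and even the finer $GL_2(K)$-bigrading from Subsection~2.4) so that passing to invariants commutes with $\nu$. Since $\nu$ sends a homogeneous element either to a homogeneous element of the same multidegree or to $0$, and the invariant subspace is the span of the lowest-weight vectors of the trivial $GL_2(K)$-components by Lemma~\ref{invariants of SL2 and UT2}(i), the projection $\nu$ maps trivial components of $F_d$ either isomorphically onto trivial components of $F_e$ or to $0$, which confirms both equivariance and the identity $\nu|_{F_e}=\mathrm{id}$. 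Assembling these observations completes the proof.
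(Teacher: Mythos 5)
Your proof is correct and follows essentially the same route as the paper, which decomposes $F_d=F_e\oplus\ker(\nu)$ into $SL_2(K)$-submodules so that every invariant splits into an invariant in $F_e$ plus one in $\ker(\nu)$; your formulation via equivariance of $\nu$ together with the equivariant section $F_e\hookrightarrow F_d$ is the same argument in slightly different packaging. The final paragraph about gradings and weight vectors is unnecessary for the conclusion but introduces no error.
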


Our strategy to determine whether the algebra $F_d^{SL_2(K)}$ is finitely generated will be the following.
First we shall consider the case when $KY_{p+1}\cong V_p$ is an irreducible $SL_2(K)$-module
and shall describe the cases when $F_{p+1}^{SL_2(K)}$ is not finitely generated.
We shall show that this holds if and only if $p>2$.
In view of the above Lemma \ref{projection removing blocks}, this will imply that $F_d^{SL_2(K)}$ is never finitely generated if
$KY_d$ contains a submodule $V_p$, $p>2$. By similar arguments we shall eliminate the cases when
$KY_d$ contains a submodule isomorphic to $V_2\oplus V_p$, $p=0,1,2$, and $V_1\oplus V_1$. In all other cases, namely when
$KY_d=V_1\oplus V_0\oplus\cdots\oplus V_0$ or $d=3$ and $KY_d=V_2$, we shall show that $F_d^{SL_2(K)}$ is finitely generated.

\begin{lemma}\label{nonfinite generation case k at least 3}
Let $d=p+1$ and $KY_{p+1}\cong V_p$, $p\geq 3$.
Then the algebra $F_{p+1}^{SL_2(K)}$ is not finitely generated.
\end{lemma}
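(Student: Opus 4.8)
The plan is to verify the hypotheses of the infinite-generation results proved earlier in this section, namely Lemma \ref{the invariants are not finitely generated - 1} and Proposition \ref{the invariants are not finitely generated - 2}, with $G=SL_2(K)$ and $d=k+1$. The first thing to record is that $(KX_{k+1})^{SL_2(K)}=0$: since $KX_{k+1}\cong V_k$ is an irreducible $SL_2(K)$-module which is nontrivial for $k\geq 1$, it contains no nonzero fixed vector. Consequently every homogeneous $SL_2(K)$-invariant of $F_{k+1}$ of degree $1$ vanishes, so $F_{k+1}^{SL_2(K)}\subseteq F_{k+1}'$, and it suffices to show that this graded abelian algebra is infinite dimensional. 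I would split the argument according to the transcendence degree of $K[X_{k+1}]^{SL_2(K)}$, which by Proposition \ref{transcendence degree of invariants} equals $k-2$ for $k>2$.

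For $k\geq 4$ the argument is immediate: here $k-2\geq 2>1$, so $K[X_{k+1}]^{SL_2(K)}$ contains two algebraically independent homogeneous invariants, and Proposition \ref{the invariants are not finitely generated - 2} applies verbatim to give that $F_{k+1}^{SL_2(K)}$ is not finitely generated.

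The remaining case $k=3$ is the main obstacle, because then the transcendence degree is exactly $k-2=1$ and Proposition \ref{the invariants are not finitely generated - 2} no longer applies: one cannot produce two algebraically independent invariants to feed into the Poisson bracket $\pi(f_1,f_2)$. Instead I would fall back on Lemma \ref{the invariants are not finitely generated - 1}, for which it remains to check that $K[X_4]^{SL_2(K)}\not=K$ and $(F_4')^{SL_2(K)}\not=0$. The first is clear, since the transcendence degree $1>0$ forces a nonconstant invariant. For the second, the crucial point is to exhibit an explicit nonzero invariant in the commutator ideal; this is supplied by Example \ref{invariants of second degree} with $k=2m+1=3$, which produces
\[
w_4=\sum_{i=1}^{4}\binom{3}{i-1}(-1)^{i-1}[x_i,x_{5-i}]=2[x_1,x_4]-6[x_2,x_3]\in (F_4')^{SL_2(K)},
\]
a nonzero element. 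With both conditions verified, Lemma \ref{the invariants are not finitely generated - 1} shows that $F_4^{SL_2(K)}$ is not finitely generated, completing the case $k=3$ and hence the proof.
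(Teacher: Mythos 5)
Your proof is correct and follows essentially the same route as the paper: for $k\geq 4$ it invokes Proposition \ref{transcendence degree of invariants} together with Proposition \ref{the invariants are not finitely generated - 2}, and for $k=3$ it falls back on Lemma \ref{the invariants are not finitely generated - 1} using the quadratic invariant from Example \ref{invariants of second degree}. Your explicit computation $w_4=2[x_1,x_4]-6[x_2,x_3]$ is a correct instantiation of that example and matches what the paper relies on.
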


\begin{proof}
The group $SL_2(K)$ does not fix any nonzero element of $V_p$ because $p\geq 3$.
If $p\geq 4$ we use Proposition \ref{transcendence degree of invariants} which gives that the transcendence degree of
$S(KY_{p+1}]^{SL_2(K)}$ is equal to $p-2\geq 2$ and by Proposition \ref{the invariants are not finitely generated - 2}
the algebra $F_{p+1}^{SL_2(K)}$ is not finitely generated.
If $p=3$ we shall apply Lemma \ref{the invariants are not finitely generated - 1}.
Since the transcendence degree of $S(KY_4)^{SL_2(K)}\not=K$ is equal to 1 and $S(KY_4)^{SL_2(K)}\not=K$,
it is sufficient to show that $F_4^{SL_2(K)}\not=0$. A nonzero element in $F_4^{SL_2(K)}$ is given in
Example \ref{invariants of second degree}.
\end{proof}

\begin{remark}
We may give a constructive proof of Lemma \ref{nonfinite generation case k at least 3}.
Example \ref{invariants of second degree} provides a nonzero element in $F_{p+1}^{SL_2(K)}$ for $p\geq 3$ odd.
In order to apply Lemma \ref{the invariants are not finitely generated - 1}
we need an $SL_2(K)$-invariant element of positive degree in $S(KY_{p+1})$.
For this purpose we may take the discriminant.
Recall that if $\theta_1,\ldots,\theta_p$ are the zeros of the polynomial
$f(t)=\xi_0t^p+\xi_1t^{p-1}+\cdots+\xi_{p-1}t+\xi_p$ with coefficients
$\xi_0,\xi_1,\ldots,\xi_p$ in $S(V_p)$, then the discriminant
\[
\xi_0^{2p-2}\prod_{1\leq i<j\leq p}(\theta_i-\theta_j)^2
\]
of $f(t)$ is an $SL_2(K)$-invariant element of degree $2(p-1)$ in $S(V_p)=S(KY_{p+1})$.
For $p>3$ even we may choose one more explicit $SL_2(K)$-invariant of second degree in $S(KY_{p+1})$, namely the element
in Example \ref{invariants of second degree}. Then we may produce a nonzero element in $F_{p+1}^{SL_2(K)}$ applying
Proposition \ref{independent polynomials in R_d}.
\end{remark}

\begin{lemma}\label{finite generation case 10000}
Let
\[
KY_d\cong V_1\oplus V_0\oplus\cdots\oplus V_0,
\]
where $V_1$ and $V_0\oplus\cdots\oplus V_0$ have bases
$Y_2$ and $\{y_3,\ldots,y_d\}$, respectively. Then the algebra $F_d^{SL_2(K)}$ is generated by $[y_2,y_1]$ and
$\{y_3,\ldots,y_d\}$.
\end{lemma}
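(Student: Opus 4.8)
The plan is to pin down the Lie subalgebra $\mathcal{L}$ generated by $[x_2,x_1],x_3,\ldots,x_d$ and to show that it exhausts $F_d^{SL_2(K)}$. One inclusion is free: each generator is $SL_2(K)$-invariant and the invariants form a subalgebra, so $\mathcal{L}\subseteq F_d^{SL_2(K)}$. For the reverse inclusion I would first split off the linear part. Since the $\mathbb{Z}$-grading $F_d=KX_d\oplus F_d'$ is $SL_2(K)$-stable, $F_d^{SL_2(K)}=(KX_d)^{SL_2(K)}\oplus (F_d')^{SL_2(K)}$; and as $V_1$ has no nonzero fixed vector while the remaining summands are trivial, $(KX_d)^{SL_2(K)}$ is the span of $x_3,\ldots,x_d$, which lies in $\mathcal{L}$. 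Everything therefore reduces to proving $(F_d')^{SL_2(K)}\subseteq\mathcal{L}$.

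I would prove this by induction on $d$, peeling off one trivial summand at a time. For the base case $d=2$ we have $KX_2=V_1$, and the degree-$n$ component of $F_2'$ is spanned by the elements $[x_2,x_1]x_1^ax_2^b$ with $a+b=n-2$; since $[x_2,x_1]$ spans $V_1\otimes_{\text{\rm skew}}V_1=\det\otimes V_0$ by Lemma \ref{Young rule, (skew-)symmetric tensors}(iii) and the monomials of degree $n-2$ span $V_{n-2}$, this component is isomorphic to $\det\otimes V_{n-2}$, which by Lemma \ref{invariants of SL2 and UT2}(i) carries an $SL_2(K)$-invariant only when $n=2$. Hence $(F_2')^{SL_2(K)}=K[x_2,x_1]\subseteq\mathcal{L}$.

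For the inductive step I would write $KX_d=KX_{d-1}\oplus Kx_d$ with $x_d$ spanning a $V_0$, so that $KX_{d-1}\cong V_1\oplus V_0\oplus\cdots\oplus V_0$ and the inductive hypothesis applies to $F_{d-1}$. Proposition \ref{invariants for d from these from d-1} then supplies a basis of $(F_d')^{SL_2(K)}$ made of the elements $v_i\,\text{\rm ad}^nx_d$, with $v_i$ a basis of $(F_{d-1}')^{SL_2(K)}$, together with the elements $\pi(x_d,u_j)\,\text{\rm ad}^nx_d$, with $u_j$ a basis of $\omega(K[X_{d-1}])^{SL_2(K)}$. The first family lies in $\mathcal{L}$ immediately: by induction $v_i\in (F_{d-1}')^{SL_2(K)}\subseteq F_{d-1}^{SL_2(K)}\subseteq\mathcal{L}$, and bracketing with the generator $x_d$ keeps us inside $\mathcal{L}$. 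For the second family the key observation is that $K[X_{d-1}]^{SL_2(K)}=K[x_3,\ldots,x_{d-1}]$, because $K[X_2]^{SL_2(K)}=K$ by Example \ref{invariants for small d}(i); hence each $u_j$ is a polynomial in the trivial variables $x_3,\ldots,x_{d-1}$ alone, so $\partial u_j/\partial x_1=\partial u_j/\partial x_2=0$ and the formula of Proposition \ref{invariants for d from these from d-1} collapses to $\pi(x_d,u_j)=\sum_{k=3}^{d-1}[x_d,x_k](\partial u_j/\partial x_k)(\text{\rm ad}\,X_{d-1})$, a linear combination of iterated brackets of the generators $x_3,\ldots,x_d$ and therefore an element of $\mathcal{L}$. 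Multiplying once more by $\text{\rm ad}^nx_d$ stays in $\mathcal{L}$, which finishes the induction.

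The main obstacle is exactly this second family: a priori an invariant $\pi(x_d,u_j)$ could contain the brackets $[x_d,x_1]$ or $[x_d,x_2]$, which are not visibly expressible through $[x_2,x_1]$ and the trivial variables, and the whole argument hinges on the fact that invariance forces $u_j$ into $K[x_3,\ldots,x_{d-1}]$ and thereby annihilates the $x_1,x_2$ derivatives. The grading splitting, the triviality of the linear part, and the base case are then routine once this point is in place.
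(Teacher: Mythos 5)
Your proposal is correct and follows essentially the same route as the paper: induction on $d$ via Proposition \ref{invariants for d from these from d-1}, with the base case $F_2^{SL_2(K)}=K[x_2,x_1]$ and the key observation that $\omega(K[X_{d-1}])^{SL_2(K)}$ consists of polynomials in the trivial variables $x_3,\ldots,x_{d-1}$ only (since $K[X_2]^{SL_2(K)}=K$), so that the elements $\pi(x_d,u_j)\,\text{ad}^nx_d$ involve only brackets of $x_3,\ldots,x_d$. Your explicit handling of the linear part and the representation-theoretic justification of the base case via Lemma \ref{Young rule, (skew-)symmetric tensors} are just slightly more detailed versions of what the paper states.
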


\begin{proof}
We proceed by induction on $d$ applying Proposition \ref{invariants for d from these from d-1} in each step.
For $d=2$ we have that $S(KY_2)^{SL_2(K)}=S(KV_1)^{SL_2(K)}=K$. Since as a $GL_2(K)$-module $F_2'$
decomposes as a direct sum of $\det\otimes V_n$, $n\geq 0$, we obtain that $F_2^{SL_2}$ is the one-dimensional vector space
spanned by the commutator $[y_2,y_1]$. By induction, we assume that the Lie algebra $F_{d-1}^{SL_2(K)}$ is generated by
$[y_2,y_1],y_3,\ldots,y_{d-1}$ and has a basis consisting of all commutators
\[
[y_2,y_1]\text{ad}^{n_3}y_3\cdots\text{ad}^{n_{d-1}}y_{d-1},\quad n_i\geq 0,\quad i=3,\ldots,d-1,
\]
starting with $[y_2,y_1]$ and all commutators
\[
[y_{i_1},y_{i_2},\ldots,y_{i_n}],\quad i_1>i_2\leq\cdots\leq i_n,\quad i_j=3,\ldots,d-1,\quad n=1,2,\ldots .
\]
The vector space of the $SL_2(K)$-invariants in the augmentation ideal of $S(KY_{d-1})$
has a basis consisting of all monomials in $y_3,\ldots,y_{d-1}$.
Now Proposition \ref{invariants for d from these from d-1} implies that the $SL_2(K)$-invariants in $F_d'$ are spanned by
the elements $[y_2,y_1]\text{ad}^{n_3}y_3\cdots\text{ad}^{n_d}y_d$ and the commutators of length $\geq 2$ in $y_3,\ldots,y_d$.
Since $y_3,\ldots,y_d$ belong to $F_d^{SL_2(K)}$, this implies that $F_d^{SL_2(K)}$ is generated by
$[y_2,y_1]$ and $\{y_3,\ldots,y_d\}$.
\end{proof}

\begin{lemma}\label{finite generation case 2}
Let $d=3$ and $KY_3\cong V_2$,
Then $F_3^{SL_2(K)}=0$.
\end{lemma}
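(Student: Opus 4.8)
The plan is to reduce the statement to the vanishing of the $SL_2(K)$-invariants in the commutator ideal $F_3'$ and then to count these invariants degree by degree inside the Poisson model $R_3$ of Lemma~\ref{Poisson structure on K[A,Y]}. First I would observe that $KX_3\cong V_2$ is an irreducible nontrivial $SL_2(K)$-module, so $(KX_3)^{SL_2(K)}=0$. Since the $SL_2(K)$-action preserves the grading of $F_3$ and $(F_3)_1=KX_3$, every homogeneous invariant of positive degree must lie in $F_3'$ (exactly as in the proof of Lemma~\ref{the invariants are not finitely generated - 1}). Hence it suffices to prove $(F_3')^{SL_2(K)}=0$.

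Next I would work inside $R_3$, where by the closedness criterion of the Preliminaries the ideal $F_3'$ consists precisely of the elements $\sum_{i=1}^3 a_if_i(Y_3)$ with $\sum_{i=1}^3 y_if_i(Y_3)=0$. Writing $KA_3\cong KY_3\cong V_2$, the $a$-linear part $\bigoplus_i a_iK[Y_3]$ is isomorphic as an $SL_2(K)$-module to $V_2\otimes K[Y_3]$, and the closedness condition says that $F_3'$ is the kernel of the multiplication map $\partial\colon V_2\otimes K[Y_3]\to K[Y_3]$, $a_i\otimes g\mapsto y_ig$. Because $SL_2(K)$ acts by the same matrices on $A_3$ and on $Y_3$, the map $\partial$ is $SL_2(K)$-equivariant, and it is surjective onto the augmentation ideal. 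Keeping track of both gradings, an element $\sum a_if_i$ with $f_i\in K[Y_3]_m$ has $F_3$-degree $m+1$, so in this degree $\partial$ restricts to the equivariant surjection $\partial\colon V_2\otimes K[Y_3]_m\to K[Y_3]_{m+1}$ whose kernel is $(F_3')_{m+1}$.

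Then I would count invariants. Using the standard decomposition $K[Y_3]_m\cong\bigoplus_{j=0}^{\lfloor m/2\rfloor}V_{2m-4j}$ together with the Clebsch--Gordan rule of Lemma~\ref{Young rule, (skew-)symmetric tensors}(i), one gets that $\dim(V_2\otimes K[Y_3]_m)^{SL_2(K)}$ equals the multiplicity of $V_2$ in $K[Y_3]_m$, which is $1$ when $m$ is odd and $0$ when $m$ is even; likewise $\dim(K[Y_3]_{m+1})^{SL_2(K)}$ equals the multiplicity of $V_0$ in $K[Y_3]_{m+1}$, which is again $1$ when $m$ is odd and $0$ when $m$ is even. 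Since $SL_2(K)$ is reductive and $\operatorname{char}K=0$, taking invariants is exact, so $\partial$ restricts to a surjection of the invariant subspaces; rank--nullity then yields $\dim(F_3')^{SL_2(K)}_{m+1}=\dim(V_2\otimes K[Y_3]_m)^{SL_2(K)}-\dim(K[Y_3]_{m+1})^{SL_2(K)}=0$ for every $m$. Therefore $(F_3')^{SL_2(K)}=0$, and combined with the first paragraph this gives $F_3^{SL_2(K)}=0$.

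I expect the main obstacle to be the bookkeeping of the two gradings (the $F_3$-degree versus the polynomial degree in $Y_3$) together with the verification that $\partial$ is genuinely $SL_2(K)$-equivariant and surjective, so that exactness of the invariant functor may be applied cleanly to equate the kernel count with the difference of the two multiplicities. As an independent check, one can instead compute the Hilbert series $H(F_3^{SL_2(K)},z)=M'_{GL_2(K)}(F_3,0,1,z)$ by substituting $z_1\mapsto t_1^2z$, $z_2\mapsto t_1t_2z$, $z_3\mapsto t_2^2z$ into $H(F_3,Z_3)$ and verifying that the resulting series vanishes identically.
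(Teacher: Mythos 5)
Your proof is correct, but it takes a genuinely different route from the paper. The paper disposes of this case in two lines: it quotes the multiplicity series $H_{GL_2}(F_3^{UT_2(K)},t_1,t_2,z)=t^2z+\frac{tu^2z^2}{(1-t^2z)(1-uz)}$ computed in \cite[Example 3.2]{DDF} and observes that substituting $t=0$, $u=1$ gives $H(F_3^{SL_2(K)},z)=0$ --- essentially the ``independent check'' you mention at the end. Your main argument is instead a self-contained equivariant computation: you realize $(F_3')_{m+1}$ as the kernel of the multiplication map $\partial\colon V_2\otimes K[Y_3]_m\to K[Y_3]_{m+1}$ inside $R_3$, check equivariance and surjectivity, and then use complete reducibility to equate $\dim(F_3')^{SL_2(K)}_{m+1}$ with the difference of two multiplicities, both of which equal $1$ for $m$ odd and $0$ for $m$ even. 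All the ingredients check out: the closedness criterion $\sum y_if_i=0$ from the Preliminaries identifies $F_3'$ with $\ker\partial$, the decomposition $S^m(V_2)\cong\bigoplus_{j=0}^{\lfloor m/2\rfloor}V_{2m-4j}$ is standard, and the Clebsch--Gordan rule of Lemma~\ref{Young rule, (skew-)symmetric tensors}(i) gives that $V_2\otimes V_k$ contains a trivial summand precisely when $k=2$. What your approach buys is independence from the external computation in \cite{DDF} and a conceptual explanation of \emph{why} the invariants vanish (the trivial summands of $V_2\otimes K[Y_3]_m$ are exactly used up by those of $K[Y_3]_{m+1}$); what the paper's approach buys is brevity and uniformity with the other small-dimensional cases in Section 4, all of which are handled by the same multiplicity-series substitution.
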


\begin{proof}
In \cite[Example 3.2]{DDF} we have computed the Hilbert series of the algebra $F_3^{UT_2(K)}$:
\[
H_{GL_2(K)}(F_3^{UT_2(K)},t_1,t_2,z)=t_1^2z+\frac{t_1^3t_2z^2}{(1-t_1^2z)(1-t_1t_2z)}
\]
\[
=t_1^2z+\frac{t_1(t_1t_2)^2z^2}{(1-t_1^2z)(1-t_1t_2z)}=t^2z+\frac{tu^2z^2}{(1-t^2z)(1-uz)},
\]
where $t=t_1$, $u=t_1t_2$. Since the Hilbert series of $F_3^{SL_2(K)}$ is obtained from the Hilbert series of
$F_3^{UT_2(K)}$ substituting $t=0$ and $u=1$, we obtain that $H(F_3^{SL_2(K)},z)=0$, and hence $F_3^{SL_2(K)}=0$.
\end{proof}

As we have agreed below Example \ref{action of g1 and g2}, if $V_p$ is an irreducible component of the $SL_2(K)$-module $KY_d$,
then $V_p$ has a basis $\{y_i,y_{i+1},\ldots,y_{i+p}\}$
with the same action of $SL_2(K)$ as on the basis $\{e_1^p,e_1^{p-1}y_2,\ldots,e_2^p\}$.

\begin{lemma}\label{nonfinite generation case 22, 31, 32, 33}
Let $KY_d\cong V_1\oplus V_1,V_2\oplus V_0,V_2\oplus V_1$ or $V_2\oplus V_2$.
Then $F_d^{SL_2(K)}$ is not finitely generated.
\end{lemma}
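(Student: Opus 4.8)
The plan is to separate the four cases according to two features: whether $KX_d$ has nonzero $SL_2(K)$-fixed vectors, and whether the transcendence degree of $K[X_d]^{SL_2(K)}$ exceeds $1$. This sorts the cases into three groups, two of which fall to the ready-made criteria of Lemma \ref{the invariants are not finitely generated - 1} and Proposition \ref{the invariants are not finitely generated - 2}, while the third requires a finer argument.

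For $KX_d\cong V_2\oplus V_1$ and $KX_d\cong V_2\oplus V_2$, I would first note that $(KX_d)^{SL_2(K)}=0$, since $V_1$ and $V_2$ are nontrivial irreducible modules and hence have no nonzero fixed vectors. I would then exhibit two algebraically independent homogeneous invariants, so that the transcendence degree of $K[X_d]^{SL_2(K)}$ exceeds $1$, and invoke Proposition \ref{the invariants are not finitely generated - 2}. For $V_2\oplus V_2$, with bases $\{x_1,x_2,x_3\}$ and $\{x_4,x_5,x_6\}$, I take the two discriminants $x_2^2-x_1x_3$ and $x_5^2-x_4x_6$ of Example \ref{invariants for small d}(ii); they involve disjoint variables, hence are algebraically independent. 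For $V_2\oplus V_1$, with bases $\{x_1,x_2,x_3\}$ and $\{x_4,x_5\}$, I take $f=x_2^2-x_1x_3$ together with the joint invariant $h=x_1x_5^2-2x_2x_4x_5+x_3x_4^2$; its $SL_2(K)$-invariance is checked on the generators $g_1,g_2$ as in Remark \ref{how to recognize invariants}, and $f,h$ are algebraically independent because their bidegrees $(2,0)$ and $(1,2)$ in the $(V_2,V_1)$-variables force the monomials $f^ih^j$ to be pairwise of distinct bidegree.

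For $KX_d\cong V_1\oplus V_1$, with bases $\{x_1,x_2\}$ and $\{x_3,x_4\}$, the transcendence degree of $K[X_4]^{SL_2(K)}=K[x_1x_4-x_2x_3]$ equals $1$ by Example \ref{invariants for small d}(iii), so Proposition \ref{the invariants are not finitely generated - 2} does not apply and I would use Lemma \ref{the invariants are not finitely generated - 1} instead. Here $(KX_4)^{SL_2(K)}=0$ and $K[X_4]^{SL_2(K)}\neq K$, so it suffices to produce a nonzero invariant in $F_4'$. The commutator $[x_2,x_1]$ spans $\Lambda^2V_1$, on which $SL_2(K)$ acts through $\det=1$; hence $[x_2,x_1]\in(F_4')^{SL_2(K)}$ is nonzero, and Lemma \ref{the invariants are not finitely generated - 1} completes this case.

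The main obstacle is $KX_d\cong V_2\oplus V_0$, where $x_4$ spans the trivial summand so that $(KX_4)^{SL_2(K)}=Kx_4\neq 0$, and neither Lemma \ref{the invariants are not finitely generated - 1} nor Proposition \ref{the invariants are not finitely generated - 2} applies. Here I would argue directly in $R_4$. With $f=x_2^2-x_1x_3$, Proposition \ref{independent polynomials in R_d} gives $0\neq\pi(f,x_4)\in(F_4')^{SL_2(K)}$, and by Lemma \ref{Poisson structure on K[A,Y]} the module products $\pi(f,x_4)f^n$, $n\geq 0$, are nonzero, are $SL_2(K)$-invariant (since $f$ and $\pi(f,x_4)$ are), and have degree $2+2n$ in $x_1,x_2,x_3$. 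Now suppose $F_4^{SL_2(K)}$ were finitely generated; choosing homogeneous generators, the only invariant of degree one is $x_4$, so all generators of degree $\geq 2$ lie in the abelian ideal $F_4'$. Since $F_4$ is metabelian, every nonzero iterated bracket of the generators has the form $w\,(\text{ad}\,x_4)^n$ with $w$ one of the finitely many generators in $F_4'$; as $\text{ad}\,x_4$ preserves the degree in $x_1,x_2,x_3$, the space $(F_4')^{SL_2(K)}$ would then have bounded degree in these variables, contradicting the unbounded degrees of $\pi(f,x_4)f^n$. The delicate point throughout is exactly this last case: the trivial summand breaks the hypotheses of the general criteria, forcing the finer grading argument, and it is here that the Poisson model $R_d$ and the nonvanishing of the products $u\cdot f$ are indispensable.
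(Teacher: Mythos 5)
Your proof is correct, and its overall skeleton matches the paper's: sort the four cases by whether the two general criteria apply, and treat $V_2\oplus V_0$ by hand because the trivial summand violates the hypothesis $(KX_d)^{SL_2(K)}=0$. The difference is in how you handle $V_2\oplus V_1$ and $V_2\oplus V_2$: you invoke Proposition \ref{the invariants are not finitely generated - 2} by producing two algebraically independent polynomial invariants (the two discriminants for $V_2\oplus V_2$; the discriminant together with $x_1x_5^2-2x_2x_4x_5+x_3x_4^2$ for $V_2\oplus V_1$, with a clean bidegree argument for independence), whereas the paper writes down explicit degree-two Lie invariants in $(F_d')^{SL_2(K)}$ directly --- $[x_4,x_5]$ and $[x_1,x_6]-2[x_2,x_5]+[x_3,x_4]$ --- and applies Lemma \ref{the invariants are not finitely generated - 1} to all of cases (i), (iii), (iv) uniformly. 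Your route buys you the freedom not to guess Lie invariants (Proposition \ref{independent polynomials in R_d} manufactures one from the two polynomials), at the cost of routing through the Poisson machinery and of an unproved (though routine and classically known) invariance check for the joint invariant $h$; the paper's route is more explicit and also delivers the low-degree generators reused later in Section 4. For $V_1\oplus V_1$ you use the same criterion as the paper with an equally valid witness $[x_2,x_1]$ in place of $[x_1,x_4]-[x_2,x_3]$. For $V_2\oplus V_0$ your argument is the paper's, but stated more carefully: you make the final step precise by observing that $\operatorname{ad}x_4$ preserves the degree in $x_1,x_2,x_3$ while the sequence $\pi(f,x_4)f^n$ has unbounded such degree, which is exactly the justification the paper leaves implicit in the phrase ``for sufficiently large $m$ these elements cannot be expressed'' (and your notation also avoids the paper's slip of writing $v_{22}$ where $v_{20}$ is meant).
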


\begin{proof}
If $KY_d\cong V_i\oplus V_j$, we assume that $V_i$ and $V_j$ have bases $\{y_1,\ldots,y_{i+1}\}$ and
$\{y_{i+2},\ldots,y_{i+j+2}\}$, respectively.
First, we shall construct nonzero elements in $(F_d')^{SL_2(K)}$.

(i) Let $d=4$ and $KY_4\cong V_1\oplus V_1$. Consider the element
\[
v_{11}=[y_1,y_4]-[y_2,y_3]\in F_4.
\]
Applying Remark \ref{how to recognize invariants} we see that $g_1(v_{11})=g_2(v_{11})=v_{11}$.
Hence $v_{11}\in F_4^{SL_2(K)}$.

(ii) Let $d=4$ and $KY_4\cong V_2\oplus V_0$.
It is well known for more than 100 years, see \cite{G}, that
the element $f=y_2^2-y_1y_3$ belongs to $S(KY_3)^{SL_2(K)}=S(V_2)^{SL_2(K)}$.
Since $y_4$ is an $SL_2(K)$-invariant, applying Proposition \ref{invariants for d from these from d-1} we obtain the nonzero
element
\[
v_{20}=\pi(y_4,f)=\sum_{l=1}^{3}[y_4,y_l]
\frac{\partial f}{\partial y_l}(\text{\rm ad}Y_3)
\]
\[
=2[y_4,y_2,y_2]-[y_4,y_1,y_3]-[y_4,y_3,y_1]\in (F_4')^{SL_2(K)}.
\]

(iii) Let $d=5$ and $KY_5\cong V_2\oplus V_1$. Then the simplest nonzero $SL_2(K)$-invariant is
\[
v_{21}=[y_4,y_5]\in F_5^{SL_2(K)}.
\]

(iv) Let $d=6$ and $KY_6\cong V_2\oplus V_2$. Using the element
\[
w=\sum_{i=0}^2\binom{2}{i}(-1)^ie_1^{2-j}e_2^j\otimes e_1^je_2^{2-i}\in V_2\otimes V_2
\]
from Example \ref{invariants of second degree}
we can obtain the nonzero $SL_2(K)$-invariant
\[
v_{22}=[y_1,y_6]-2[y_2,y_5]+[y_3,y_4]\in F_6^{SL_2(K)}.
\]

It is also known that if $KY_4=V_1\oplus V_1$,
then $S(KY_4)^{SL_2(K)}=S(K(y_1y_4-y_2y_3))$.
Hence in all four cases we have $SL_2(K)$-invariants of positive degree in
$S(KY_d)$. In (i) this  is $f=y_1y_4-y_2y_3$ and in (ii)--(iv) -- $f=y_2^2-y_1y_3$.
Since $KY_d^{SL_2(K)}=0$ in the cases (i), (iii) and (iv),
by Lemma \ref{the invariants are not finitely generated - 1} we conclude that the algebra $F_d^{SL_2(K)}$ is not finitely generated.

Let us assume that in the case (ii) the algebra $F_4^{SL_2(K)}$ is finitely generated. Since $KY_4^{SL_2(K)}=Ky_4$, this implies
that $F_4^{SL_2(K)}$ is generated by $y_4$ and a finite number of elements $w_1,\ldots,w_m\in (F_4')^{SL_2(K)}$.
Hence $(F_4')^{SL_2(K)}$ is spanned by
\[
w_i\text{ad}^ny_4,\quad i=1,\ldots,m,\quad n\geq 0.
\]
But $(F_4')^{SL_2(K)}$ contains the infinite sequence of elements
\[
v_{22}(\text{ad}^2y_2-\text{ad}y_1\text{ad}y_3)^l,\quad l=0,1,2,\ldots,
\]
and for sufficiently large $l$ these elements cannot be expressed as linear combinations of $w_i\text{ad}^ny_4$.
Again, $F_4^{SL_2(K)}$ is not finitely generated.
\end{proof}

The following theorem is one of the main results of the paper.

\begin{theorem}\label{when the algebra is finitely generated}
Let $KY_d$ be isomorphic as an $SL_2(K)$-module to the direct sum $V_{p_1}\oplus\cdots\oplus V_{p_r}$,
where $(p_1,\ldots,p_r)\not=(0,\ldots,0)$
and let $F_d=F_d({\mathfrak A}^2)$ be the $d$-generated free metabelian Lie algebra. Then the algebra
$F_d^{SL_2(K)}$ of $SL_2(K)$-invariants is finitely generated if and only if
$KY_d\cong V_1\oplus V_0\oplus\cdots\oplus V_0$ or $KY_d\cong V_2$.
\end{theorem}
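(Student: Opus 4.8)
The plan is to split the statement into its two implications and to reduce the ``only if'' direction to the finite list of configurations with non-finitely-generated invariant algebra already isolated in the preceding lemmas.

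For the ``if'' direction there is essentially nothing left to do: when $KX_d\cong V_1\oplus V_0\oplus\cdots\oplus V_0$ the finite generation, with explicit generators $[x_2,x_1]$ and $x_3,\ldots,x_d$, is exactly Lemma~\ref{finite generation case 10000}, and when $KX_d\cong V_2$ we are in the case $d=3$ where Lemma~\ref{finite generation case 2} gives $F_3^{SL_2(K)}=0$, which is trivially finitely generated. So the whole content of the theorem lies in the converse.

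For the ``only if'' direction I would argue by contraposition, using that a homomorphic image of a finitely generated Lie algebra is again finitely generated. The key mechanism is Lemma~\ref{projection removing blocks}: whenever $KX_d$ decomposes as $V'\oplus V''$ with $V'=KX_e$ spanned by an initial segment of the basis, the projection $\nu$ killing $x_{e+1},\ldots,x_d$ maps $F_d^{SL_2(K)}$ onto $F_e^{SL_2(K)}$, and by the standing relabeling convention any chosen subsum of irreducible components may be arranged to be such a $V'$. Hence, if $F_d^{SL_2(K)}$ were finitely generated, then so would be $F_e^{SL_2(K)}$ for every such subconfiguration. It therefore suffices to show that every admissible module $KX_d\cong V_{k_1}\oplus\cdots\oplus V_{k_r}$ which is \emph{not} of one of the two good forms contains, as a subsum of its irreducible components, one of the configurations already proved to have non-finitely-generated invariant algebra.

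The concluding step is a finite case analysis. Writing $n_0,n_1,n_2$ for the numbers of summands isomorphic to $V_0,V_1,V_2$ and recalling the nontriviality hypothesis $n_1+n_2\geq 1$: if some $k_i\geq 3$ the module contains a copy of $V_k$ with $k\geq3$, handled by Lemma~\ref{nonfinite generation case k at least 3}; if $n_1\geq 2$ it contains $V_1\oplus V_1$; if $n_1=1$ and $n_2\geq1$ it contains $V_2\oplus V_1$; if $n_1=0$ and $n_2\geq2$ it contains $V_2\oplus V_2$; and if $n_1=0$, $n_2=1$, $n_0\geq1$ it contains $V_2\oplus V_0$, the last four of these being covered by Lemma~\ref{nonfinite generation case 22, 31, 32, 33}. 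The only surviving possibilities are $(n_1,n_2,n_0)=(1,0,\ast)$ and $(0,1,0)$, which are precisely the two good forms. I expect the only real care to be needed in the exhaustiveness of this bookkeeping, namely checking that every module outside the two good forms genuinely contains one of the five base configurations as a basis-spanned subsum so that Lemma~\ref{projection removing blocks} can be applied, rather than in any new computation.
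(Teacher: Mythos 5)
Your proposal is correct and follows essentially the same route as the paper: the ``if'' direction is delegated to Lemmas \ref{finite generation case 10000} and \ref{finite generation case 2}, and the ``only if'' direction uses Lemma \ref{projection removing blocks} to project onto one of the base configurations $V_k$ ($k\geq 3$), $V_1\oplus V_1$, or $V_2\oplus V_i$ ($i=0,1,2$), which are exactly the cases covered by Lemmas \ref{nonfinite generation case k at least 3} and \ref{nonfinite generation case 22, 31, 32, 33}. Your bookkeeping with $n_0,n_1,n_2$ is an exhaustive and correct refinement of the paper's three cases (i)--(iii).
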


\begin{proof}
Let $KY_d\cong V_{p_1}\oplus\cdots\oplus V_{p_r}$. We may assume that $p_1\geq \cdots\geq p_r$.
By Lemmas \ref{finite generation case 10000} and \ref{finite generation case 2} the algebra $F_d^{SL_2(K)}$ is finitely generated
in the cases $KY_d\cong V_1\oplus V_0\oplus\cdots\oplus V_0$ and $KY_d\cong V_2$. Hence we have to show that in the cases
(i) $p_1\geq 3$; (ii) $p_1=2$, $r\geq 2$; (iii) $p_1=p_2=1$ the algebra $F_d^{SL_2(K)}$ is not finitely generated.
Applying Lemma \ref{projection removing blocks} we reduce the considerations to the following three cases:

$(\text{i}')$ $KY_d\cong V_p$, $p\geq 3$. By Lemma \ref{nonfinite generation case k at least 3} the algebra
$F_d^{SL_2(K)}$ is not finitely generated.

$(\text{ii}')$ $KY_d\cong V_2\oplus V_i$, $i=0,1,2$. Now we apply Lemma \ref{nonfinite generation case 22, 31, 32, 33}.

$(\text{iii}')$ $KY_d\cong V_1\oplus V_1$. We apply Lemma \ref{nonfinite generation case 22, 31, 32, 33} again and complete
the proof of the theorem.
\end{proof}

\section{Explicit invariants in small dimensions}

In this section we compute the explicit generators of the finitely generated $S(KY_d)^{SL_2(K)}$-module $(F_d')^{SL_2(K)}$
for $d\leq 5$ and for $d=6$ when $KY_6\cong V_2\oplus V_2$ and $KY_6\cong V_1\oplus V_1\oplus V_1$.
Applying Proposition \ref{invariants for d from these from d-1} we omit the cases containing $V_0$ as a component of $KY_d$.
In what follows, the Hilbert series are obtained from \cite[Example 3.2]{DDF} and the generators and the relations
between them from \cite[Section 5]{DDF}, taking into account the homogeneous components of bidegree $(n,n)$ only.

\begin{example}\label{only the Hilbert series, generators, and relations}

$(\text{i})$ Let $d=3$ and $KY_3\cong V_2$. In this case we have from Lemma \ref{finite generation case 2} that
\[
H((F_3')^{SL_2(K)},z)=0,
\]
and the algebra $(F_3')^{SL_2(K)}$ does not have nonzero elements.

$(\text{ii})$ Let $d=4$ and $KY_4\cong V_3$. Then
\[
H((F_4')^{SL_2(K)},z)=\frac{z^2}{1-z^4} \quad \text{and}  \quad H(S(KY_4)^{SL_2(K)},z)=\frac{1}{1-z^4}.
\]
The $S(KY_4)^{SL_2(K)}$-module $(F_4')^{SL_2(K)}$ has one generator of degree $2$
\[
v=[y_4,y_1]-3[y_3,y_2],
\]
and the single generator of $S(KY_4)^{SL_2(K)}$ is
\[
f=y_1^2y_4^2-6y_1y_2y_3y_4+4y_1y_3^3+4y_2^3y_4-3y_2^2y_3^2.
\]

$(\text{iii})$ Let $d=4$ and $KY_4\cong V_1\oplus V_1$.
By Lemma \ref{nonfinite generation case 22, 31, 32, 33} we have a nonzero ${SL_2(K)}$-invariant in $(F_4')^{SL_2(K)}$.
We shall find all generators of the
finitely generated $S(KY_4)^{SL_2(K)}$-module $(F_4')^{SL_2(K)}$. Using the suggested above method we have
\[
H((F_4')^{SL_2(K)},z)=\frac{3z^2}{1-z^2} \quad \text{and}  \quad H(S(KY_4)^{SL_2(K)},z)=\frac{1}{1-z^2}.
\]
The free generators of the $S(KY_4)^{SL_2(K)}$-module $(F_4')^{SL_2(K)}$ are
\[
v_1=[y_4,y_1]-[y_3,y_2], \quad v_2=[y_2,y_1] \quad \text{and} \quad v_3=[y_4,y_3].
\]
The single generator of $S(KY_4)^{SL_2(K)}$ is $f=y_1y_4-y_2y_3$.

$(\text{iv})$ Let $d=5$ and $KY_5\cong V_4$. Then
\[
H((F_5')^{SL_2(K)},z)=\frac{z^5}{(1-z^2)(1-z^3)},
\quad
H(K[X_5]^{SL_2(K)},z)=\frac{1}{(1-z^2)(1-z^3)}.
\]
The only generator of the $S(KY_5)^{SL_2(K)}$-module $(F_5')^{SL_2(K)}$ is
\[
v=[y_2,y_1]\cdot(12y_4^3+4y_2y_5^2-16y_3y_4y_5)+
[y_3,y_1]\cdot(-2y_1y_5^2+18y_3^2y_5-18y_3y_4^2+4y_2y_4y_5)
\]
\[
+[y_3,y_2]\cdot(16y_2y_4^2-18y_2y_3y_5)+[y_4,y_1]\cdot(-4y_2y_4^2+12y_3^2y_4+4y_1y_4y_5-16y_2y_3y_5)
\]
\[
+[y_4,y_2]\cdot(-8y_2y_3y_4+12y_2^2y_5)+[y_5,y_1]\cdot(y_1y_3y_5-9y_3^3-3y_1y_4^2-y_2^2y_5+14y_2y_3y_4)
\]
\[
+[y_5,y_2]\cdot(-8y_2^2y_4+6y_2y_3^2),
\]
where $w\cdot p(Y_m)=wp(\text{ad}Y_m)$ for $w\in F_5'$ and $p(Y_m)\in S(KY_m)$. The generators of $S(KY_5)^{SL_2(K)}$ are
\[
f_1=y_1y_5-4y_2y_4+3y_3^2,\quad
%\]
%\[
f_2=-y_1y_3y_5-2y_2y_3y_4+y_3^3+y_1y_4^2+y_2^2y_5.
\]

$(\text{v})$ Let $d=5$ and $KY_5\cong V_2\oplus V_1$. Then
\[
H((F_5')^{SL_2(K)},z)=\frac{z^2+z^3+z^4+z^5}{(1-z^2)(1-z^3)},
%\]
%and
%\[
\quad
H(S(KY_5)^{SL_2(K)},z)=\frac{1}{(1-z^2)(1-z^3)}.
\]
The four generators of the $S(KY_5)^{SL_2(K)}$-module $(F_5')^{SL_2(K)}$ are
\[
v_1=[y_5,y_4],
\]
\[
v_2=[y_4,y_2]\cdot y_5-[y_4,y_3]\cdot y_4-[y_5,y_1]\cdot y_5+[y_5,y_2]\cdot y_4,
\]
\[
v_3=[y_4,y_1]\cdot y_3y_5-[y_4,y_2]\cdot y_3y_4+[y_4,y_3]\cdot (y_2y_4-y_1y_5)-[y_5,y_1]\cdot y_2y_5+[y_5,y_2]\cdot y_1y_5,
\]
\[
v_4=-[y_4,y_1]\cdot y_3^2y_4+2[y_4,y_2]\cdot y_2y_3y_4-[y_4,y_3]\cdot y_1y_3y_4+[y_5,y_1]\cdot (2y_2y_3y_4-y_1y_3y_5)
\]
\[
+[y_5,y_2]\cdot (2y_1y_2y_5-4y_2^2y_4)+[y_5,y_3]\cdot (2y_1y_2y_4-y_1^2y_5)+[y_5,y_4]\cdot (2y_2^3-2y_1y_2y_3).
\]
Note that the first generator was suggested in Lemma \ref{nonfinite generation case 22, 31, 32, 33}.
The generators of $S(KY_5)^{SL_2(K)}$ are
\[
f_1=y_2^2-y_1y_3, \quad f_2=y_1y_5^2-2y_2y_4y_5+y_3y_4^2.
\]

$(\text{vi})$ Let $d=6$ and $KY_6\cong V_1\oplus V_1\oplus V_1$. Then
\[
H((F_6')^{SL_2(K)},z)=\frac{6z^2-z^6}{(1-z^2)^3},
\]
and
\[
H(S(KY_6)^{SL_2(K)},z)=\frac{1}{(1-z^2)^3}.
\]
The generators of the $S(KY_6)^{SL_2(K)}$-module $(F_6')^{SL_2(K)}$ are
\[
v_1=[y_2,y_1], \quad v_2=[y_4,y_3], \quad v_3=[y_6,y_5],\quad v_4=[y_4,y_1]-[y_3,y_2],
\]
\[
v_5=[y_6,y_1]-[y_5,y_2],\quad v_6=[y_6,y_3]-[y_5,y_4].
\]
The generators of $S(KY_6)^{SL_2(K)}$ are
\[
f_1=y_1y_4-y_2y_3, \quad f_2=y_1y_6-y_2y_5, \quad f_3=y_3y_6-y_4y_5.
\]
There is a relation of degree $6$ between the generators $v_1,\ldots,v_6$:
\[
v_1\cdot f_3^2+v_2\cdot f_2^2+v_3\cdot f_1^2-v_4\cdot f_2f_3+v_5\cdot f_1f_3-v_6\cdot f_1f_2=0.
\]

$(\text{vii})$ Let $d=6$ and $KY_6\cong V_2\oplus V_2$. Then
\[
H((F_6')^{SL_2(K)},z)=\frac{z^2+2z^3+3z^4-z^6}{(1-z^2)^3},
%\]
\quad
%\[
H(S(KY_6)^{SL_2(K)},z)=\frac{1}{(1-z^2)^3}.
\]
The $K[X_6]^{SL_2(K)}$-module $(F_6')^{SL_2(K)}$ has 6 generators.
The first one was obtained in Lemma \ref{nonfinite generation case 22, 31, 32, 33}:
\[
v_1=[y_6,y_1]-2[y_5,y_2]+[y_4,y_3],
\]
\[
v_2=[y_2,y_1]\cdot y_6-[y_3,y_1]\cdot y_5+[y_3,y_2]\cdot y_4,
\]
\[
v_3=[y_6,y_1]\cdot y_5-[y_5,y_1]\cdot y_6+[y_4,y_2]\cdot y_6-[y_4,y_3]\cdot y_5-[y_6,y_2]\cdot y_4+[y_5,y_3]\cdot y_4,
\]
\[
v_4=[y_3,y_1]\cdot y_5^2+[y_6,y_1]\cdot (2y_2y_5-y_1y_6)-[y_4,y_3]\cdot y_3y_4-2[y_2,y_1]\cdot y_5y_6
\]
\[
+[y_5,y_1]\cdot (2y_2y_6-2y_3y_5)+[y_4,y_2]\cdot (2y_3y_5-y_2y_6)
\]
\[
+[y_5,y_2]\cdot (2y_3y_4-2y_2y_5)-2[y_3,y_2]\cdot y_4y_5-[y_6,y_2]\cdot y_2y_4,
\]
\[
v_5=[y_3,y_1]\cdot (y_1y_6-2y_2y_5)+[y_6,y_1]\cdot (y_2^2-y_1y_3)+[y_2,y_1]\cdot (2y_3y_5-2y_2y_6)
\]
\[
+2[y_5,y_1]\cdot y_2y_3+[y_4,y_2]\cdot y_2y_3-2[y_5,y_2]\cdot y_2^2+[y_3,y_2]\cdot y_2y_4-[y_4,y_1]\cdot y_3^2,
\]
\[
v_6=[y_6,y_1]\cdot y_4y_6+[y_4,y_3]\cdot y_4y_6-2[y_5,y_1]\cdot y_5y_6-2[y_4,y_2]\cdot y_5y_6+4[y_5,y_2]\cdot y_5^2
\]
\[
-2[y_6,y_2]\cdot y_4y_5-2[y_5,y_3]\cdot y_4y_5+[y_4,y_1]\cdot y_6^2+[y_6,y_3]\cdot y_4^2,
\]
while the generators of $K[Y_6]^{SL_2(K)}$ are
\[
f_1=y_1y_3-y_2^2, \quad f_2=y_1y_6-2y_2y_5+y_3y_4, \quad f_3=y_4y_6-y_5^2,
\]
with a relation between the generators of $(F_6')^{SL_2(K)}$:
\[
v_4\cdot f_2+v_5\cdot f_3-v_6\cdot f_1+v_1\cdot (f_2^2+f_1f_3)=0.
\]
\end{example}

Now we explain one of the cases in detail.

\begin{example}
Let $d=4$ and $KY_4\cong V_3$. The Hilbert series of the algebras $(F_4')^{UT_2(K)}$ and $S(KY_4)^{UT_2(K)}$
are computed in \cite[Example 5.2]{DDF}:
\[
H_{GL_2(K)}((F_4')^{UT_2(K)},t_1,t_2,z)=\frac{t_1^3t_2z^2(t_1^2+t_2^2+t_1^4t_2^4z^2+t_1^5t_2^6z^3-t_1^8t_2^6z^4)}{(1-t_1^3z)(1-t_1^2t_2z)(1-t_1^6t_2^6z^4)}
\]
\[
=\frac{t^4uz^2+u^3z^2+t^2u^5z^4+tu^7z^5-t^4u^7z^6}{(1-t^3z)(1-tuz)(1-u^6z^4)},
\]
\[
H_{GL_2(K)}(S(KY_4)^{UT_2(K)},t_1,t_2,z)=\frac{1+t_1^6t_2^3z^3}{(1-t_1^3z)(1-t_1^4t_2^2z^2)(1-t_1^6t_2^6z^4)}
\]
\[
=\frac{1+t^3u^3z^3}{(1-t^3z)(1-t^2u^2z^2)(1-u^6z^4)},
\]
where $t=t_1$, $u=t_1t_2$. The Hilbert series of $(F_4')^{SL_2(K)}$ and $S(KY_4)^{SL_2(K)}$ are obtained from the above Hilbert series
 substituting $t=0$ and $u=1$. Note that the degree of $u$ gives the information of the symmetric bidegrees of the homogeneous generators. Thus we have
\[
H((F_4')^{SL_2(K)},z)=\frac{z^2}{1-z^4}, \quad H(S(KY_4)^{SL_2(K)},z)=\frac{1}{1-z^4}.
\]
The Hilbert series suggest that the $S(KY_4)^{SL_2(K)}$-module $(F_4')^{SL_2(K)}$ has one generator of bidegree $(3,3)$ and degree $2$,
and $S(KY_4)^{SL_2(K)}$ is generated by a generator of bidegree $(6,6)$ and degree $4$. A candidate for a generator of $S(KY_4)^{SL_2(K)}$ is of the form
\[
f=\alpha_1y_1^2y_4^2+\alpha_2y_1y_2y_3y_4+\alpha_3y_1y_3^3+\alpha_4y_2^3y_4+\alpha_5y_2^2y_3^2.
\]
Since it is ${SL_2(K)}$-invariant, it is preserved under the actions of $g_1$ and $g_2$.
Easy computations give that $(\alpha_1,\alpha_2,\alpha_3,\alpha_4,\alpha_5)=(1,-6,4,4,-3)$.
Using similar arguments we obtain the generator of  the $S(KY_4)^{SL_2(K)}$-module $(F_4')^{SL_2(K)}$ of the form
\[
v=\beta_1[y_4,y_1]+\beta_2[y_3,y_2],
\]
and find that $(\beta_1,\beta_2)=(1,-3)$.

On the other hand, the Hilbert series of $S(KY_4)^{SL_2(K)}$-submodule of $(F_4')^{SL_2(K)}$ generated by $v=[y_4,y_1]-3[y_3,y_2]$
is equal to the Hilbert series of whole algebra. Thus $(F_4')^{SL_2(K)}$ is a free cyclic $S(KY_4)^{SL_2(K)}$-module generated by $v$. As a vector space
$(F_4')^{SL_2(K)}$ is spanned by the elements $v\cdot f^n$, $n\geq 0$, forming an infinite set of generators of $(F_4')^{SL_2(K)}$ as an algebra.
\end{example}

\section*{Acknowledgements}

The authors are deeply obliged to the anonymous referee for the careful reading and the numerous valuable constructive suggestions for improving
both the mathematics and the exposition of the paper.
The second named author is very thankful to the Institute of Mathematics and Informatics of
the Bulgarian Academy of Sciences for the creative atmosphere and the warm hospitality during his visit
as a post-doctoral fellow when this project was carried out.
The first named author is grateful to Leonid Bedratyuk for the useful discussions on classical invariant theory.

\end{document}